\title{Research Statement}
\author{xxxxxx}
\newcommand{\tw}{.4\textwidth}
\newtheorem{thm}{Theorem}[subsection]
\newtheorem{defn}{Definition}[subsection]
\newtheorem{rem}[thm]{Remark}
\newtheorem{defi}{Definition}[section]
\newtheorem{lemm}{Lemma}[section]
\newtheorem{rema}{Remark}[section]
\newtheorem{theorem}{Theorem}[section]
\begin{document}
\begin{center}
\textbf{\Large A new geometric approach to problems in birational geometry}

{\ }\\
\vspace{0.5cm}
\textbf{Chen-Yu Chi and Shing-Tung Yau}
\end{center}
\section{Introduction}
In this paper, we initiate a program to study problems in
birational geometry. This approach will be more geometric than
other more algebraic approaches. Most of the arguments can,
however, be phrased in a purely algebraic way. It is quite likely
some of them can be applied to deal with the geometry over
different ground fields.

Given a projective variety $M$, we shall study the geometric
information provided by the pluricanonical space $H^0(M, mK_M)$.
Note that the minimal model program, as led by Mori, Kawamata,
Koll\'ar and others, has achieved great success. While the earlier
workers had solved the problem for threefolds completely, the
spectacular finite generation question was recently solved by
several people, using different approaches: the analytic approach
due to Siu \cite{siu2} and the algebraic approach due to Birkar,
Cascini, Hacon and M\textsuperscript{c}Kernan \cite{bchm}. In our
approach, instead of using the full canonical ring, we shall focus
our study on the pluricanonical space for a fixed $m$.

Ideally, we would like to determine the birational type of our
algebraic variety based on the information on this space only. Any
birational transformations of algebraic manifolds will induce a
linear map between the corresponding pluricanonical spaces (for
each fixed $m$). The plurigenera are of course invariant under the
birational transformations. But more importantly, there are other
finer invariants that are preserved by these transformations. The
most important ones are the natural normlike functions (called
norms in this introduction) induced by integrating over $M$ the
$m$-th root of the product of a $m$-pluricanonical form and its
conjugate. The norm defines an interesting geometry which was not
explored extensively before.

In our work, we shall initiate a program to study this geometry.
The first major questions we address to are the following ones:
\smallskip

{\bf 1.~Torelli type theorem.} Given two algebraic varieties $M$
and $M'$, suppose there is a linear map that defines an isometry
(with respect to the norm mentioned above) between the two  normed
vector spaces $H ^0(M, mK_M)$ and $H^0(M', mK_{M'})$. We claim
that with a few exceptional cases of $M$ and $M'$, the linear
isometry is induced by a birational map between $M$ and $M'$. This
can be considered as a Torelli type theorem in birational
geometry.

We call this kind of theorem a Torelli type theorem because the
classical Torelli theorem says that the periods of integrals
determine an algebraic curve. This remarkable theorem was
generalized to higher dimensional algebraic varieties. The most
notable one was the work of Piatetsky-Shapiro and Shafarevich
\cite{p-s--s} for algebraic K3 surfaces, which was generalized to
K\"ahler K3s by Burns-Rapoport \cite{b-r}, where they proved the
injectivity of period maps. The surjectivity of period maps for K3
surfaces was done using Ricci flat metrics by Siu \cite{siu1} and
Todorov \cite{todorov} following the work of Kulikov \cite{Kuli}
and of Perrson and Pinkham \cite{p-p}. This phenomena of
surjectivity is known to be rather generic, and in many cases the
period map can be proved to have degree one for hypersurfaces (see
e.g. Donagi \cite{donagi}).

{\bf 2.~Existence.} Characterize geometrically and algebraically
those normed vector spaces that can be realized as the
pluricanonical spaces of some algebraic varieties the way above.
Hopefully, there may be some effective way to construct the
birational models of these varieties.

{\bf 3.~Computation.} In the case of the classical Torelli
theorems, the periods can be effectively computed by methods dated
back to Picard, Leray, Dwork and others. We hope to calculate
these normed spaces effectively too. Some differential geometric
methods will be brought in.

{\bf 4.~Relations with questions of GIT and other invariants.}
Making use of the pluricanonical series, we are able to form new
invariant (pseudo-)metrics on the algebraic manifolds. There
should be some relationship between these metrics and other well
known canonical metrics such as K\"ahler-Einstein metrics. We hope
to build up a link between our approach with other metrical
approaches to algebraic geometry.\smallskip

In this paper, we shall prove that when $|mK_M|$ has no base point
and defines a birational map, the normed space is indeed powerful
enough to determine the birational type of the algebraic
varieties. We can achieve this when $m$ is large enough (depending
on the dimension of $M$ only). Indeed we prove a Torelli theorem
that is described in {\bf 1} under rather general assumptions. We
should say that in case the manifold is one dimensional and $m=2$,
the problem was treated by Royden in his study of the
biholomorphic transformations of Teichm\"uller space. We think it
is possible to generalize Royden's work to higher dimensional
manifolds.

We shall study {\bf 2} by using a more differential geometric
approach. We here outline what kind of metric we can obtain. At
every point $\eta_0\in H^0(M, mK_M)$ and $\eta_1,\eta_2\in H^0(M,
mK_M)$, viewed as two tangent vectors at $\eta_0$ in $H^0(M,
mK_M)$, we define a hermitian metric
$$
h(\eta_1,\eta_2)=\int_M\frac{\eta_1}{\eta_0}
\overline{\left(\frac{\eta_2}{\eta_0}\right)}\langle\eta_0\rangle_m
$$
where $\frac{\eta_1}{\eta_0}$ and $\frac{\eta_2}{\eta_0}$ are
viewed as meromorphic functions on $M$ and
$\langle\eta_0\rangle_m$ is a real nonnegative continuous
$(n,n)$-form as defined in {\bf 2.1}. $H^0(M, mK_M)$ is then given
with the structure of a hermitian manifold. This hermitian
structure is closely related to the norm we mentioned above.
Actually the norm function on $H^0(M, mK_M)$ is the K\"ahler
potential of this hermitian metric in a suitable sense. More
details will be given in \cite{cy}.

\section{Pseudonorms on $H^0(M,mK_M)$ and their asymptotic properties}
\subsection{The pseudonorm $\langle\langle\ \rangle\rangle_m$}
Let $M$ be a complex manifold of dimension $n$. To every $\eta\in
H^0(M,mK_M)$ we can associate a real nonnegative continuous
$(n,n)$-form on $M$, denoted as $\langle\eta\rangle_m$, as
follows:

let $\mathcal{U}=\{(U, (w^j_U=u^j_U+iv^j_U)^n_{j=1})\}$ be an open
cover of $M$ of coordinate charts. If $\eta_{|_U}=\eta_U
(dw^1_U\wedge\dots\wedge dw^n_U)^{\otimes m}$ with
$\eta_U\in\mathcal{O}_M(U)$, we can define on $U$ a real
nonnegative continuous $(n,n)$-form
$$
\langle\eta_{|_U}\rangle_m=|\eta_U|^{\frac{2}{m}}du^1_U\wedge
dv^1_U\dots\wedge du^n_U\wedge dv^n_U
$$
and can verify that
$\{\langle\eta_{|_U}\rangle_m\}_{U\in\mathcal{U}}$ does give a
globally defined form, denoted as $\langle\eta\rangle_m$. It is
routine to see that this definition does not depend on the choice
of $\mathcal{U}$.

If $M$ is compact, we define
$$\langle\langle\eta\rangle\rangle_m=\int_M \langle\eta\rangle_m$$ and will abbreviate
it as $\langle\langle\eta\rangle\rangle$ if $m$ is clear in the context.\
Therefore, for a compact complex manifold $M$ we have defined a function
$$\langle\langle\ \rangle\rangle : H^0(M,mK_M)\to\mathbf R_{\geq 0}$$
and will call it the $pseudonorm$ associated to $mK_M$.

From the fact that $|a+b|^{\alpha}\leq|a|^{\alpha}+|b|^{\alpha}$
for any $0< \alpha <1$ and $a, b\in\mathbf C$ we can verify the
triangle inequality
$\langle\langle\eta_1+\eta_2\rangle\rangle\leq\langle\langle\eta_1\rangle\rangle
+\langle\langle\eta_2\rangle\rangle$ for any $\eta_1,\eta_2\in
H^0(M,mK_M)$. From the definition
$\langle\langle\eta\rangle\rangle=0$ if and only if $\eta=0\in
H^0(M,mK_M)$. However, $\langle\langle
c\eta\rangle\rangle=|c|^{\frac{2}{m}}\langle\langle\eta\rangle\rangle$
for $c\in\mathbf C$, which shows that $\langle\langle\
\rangle\rangle$ is not a norm if $m\neq 2$.\

We define a metric space structure on $H^0(M,mK_M)$ using
$\langle\langle\ \rangle\rangle$ by
$$
d(\eta_1,\eta_2)=\langle\langle\eta_1-\eta_2\rangle\rangle \text{
for any}\ \eta_1,\eta_2\in H^0(M,mK_M)\text{.}
$$
$H^0(M,mK_M)$ so metrized will be denoted as $\big(H^0(M,mK_M),
\langle\langle\ \rangle\rangle\big)$.

If $\varphi :M'\dashrightarrow M$ is a birational map, then the
induced isomorphism $\Phi : \big(H^0(M,mK_M),\langle\langle\
\rangle\rangle\big)\to \big(H^0(M',mK_{M'}),\langle\langle\
\rangle\rangle\big)$ is an isometry.

\subsection{A local asymptotic expansion}
We will state the main local asymptotic result, whose proof can be found in \cite{chi}, and then deduce from it the global one in the next section, namely the asymptotic property of $\langle\langle\ \rangle\rangle$.

We first settle the notation as follows:\\
\\
$n,m\in\mathbf N$, $m>2$, $\Delta_0=\{(z_1,\dots ,z_n)\in\mathbf C^n|\ |z_j|<1,j=1\dots ,n\}$,\\
\\
$\chi (x_1,y_1,\dots, x_n, y_n)\in\mathcal C^{\infty}(\overline\Delta_0)$, $\phi (z_1,\dots ,z_n)\in\mathcal O (\overline\Delta_0)$,\\
\\
$A=(a_1,\dots ,a_n)$, $B=(b_1,\dots ,b_n)\in (\mathbf N\cup\{0\})^n$,\\
\\
$l_j=\frac{b_j+1}{a_j}$ if $a_j\neq 0$ and $=\infty$ otherwise, $j=1,\dots ,n$.\\
\\
$l=\min{\{l_j|\ j=1,\dots ,n\}}$,\\
\\
Assume that $l(A,B)=l_1=\dots =l_{\mu (A,B)}<l_{\mu (A,B)+1}\leq\dots\leq\l_n$. Notice that $l(A,B)$ and $\mu (A,B)$ only depend on the multi-indices $A$ and $B$. If $A$ and $B$ are clear in our arguments we will denote $l(A,B)$ and $\mu (A,B)$ by $l$ and $\mu$ respectively.

We abbreviate $(x_1,y_1,\dots, x_n, y_n)$, $(z_1,\dots ,z_n)$, $z^{a_1}_1\dots z^{a_n}_n$, $|z_1|^{b_1}\dots |z_n|^{b_n}$, and $dx_1 dy_1 \dots dx_n dy_n$
as $(X,Y)$, $Z$, $Z^A$, $|Z|^B$, and $dX\ dY$ respectively. Let
$$\Psi (t)=\int_{\overline\Delta_0}\chi
(X,Y)\big|Z^A+t\phi (Z)\big|^{\frac{2}{m}}\big|Z\big|^{2B} dXdY\text{.}$$
\begin{thm}\ \\
$$\Psi (t)-\Psi (0)=\left\{
\begin{array}{cr}
O\left(|t|\big(ln\frac{1}{|t|}\big)^{\mu}\right) & \text{if}\ \ 2l+\frac{2}{m}\geq 1;\\
\\
\begin{array}{l}
c(A,B,\phi )\ |t|^{2l+\frac{2}{m}}\big(ln\frac{1}{|t|}\big)^{\mu -1}\\
+o\left(|t|^{2l+\frac{2}{m}}\big(ln\frac{1}{|t|}\big)^{\mu -1}\right)\end{array} & \text{if}\ \ 2l+\frac{2}{m}<1\text{,}
\end{array} \right.$$
where $c(A,B,\phi)$ is a real number depending on $\phi$. In the last case we have $c(A,B,\phi)\geq 0$, and $$c(A,B,\phi)=0\ \Longleftrightarrow\ \phi (0,\dots ,0, z_{\mu +1},\dots , z_n)\equiv 0\text{.}$$
\end{thm}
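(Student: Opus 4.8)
The plan is to localize the integral to a neighborhood of the ``critical stratum'' $\{z_1=\dots=z_\mu=0\}$, extract the leading singular behavior there by an explicit rescaling in the critical variables, and reserve a subharmonicity argument for the positivity statement. Throughout write $z'=(z_{\mu+1},\dots,z_n)$ for the non-critical coordinates and set $\phi_0(z')=\phi(0,\dots,0,z')$. First I would split the domain into the part bounded away from the divisor $\{Z^A=0\}$ and a shrinking neighborhood of it. On the former, $|Z^A+t\phi|^{2/m}$ is smooth in $t$ with uniformly bounded $t$-derivative, so this piece contributes $O(|t|)$. Among the components $\{z_j=0\}$ of the divisor, those with $l_j>l$ produce contributions of strictly higher order in $|t|$ by the one-variable version of the computation below, so they are absorbed into the error, and the entire leading effect concentrates near the deepest stratum $\{z_1=\dots=z_\mu=0\}$. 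This already separates the two regimes: when $2l+\tfrac2m\ge1$ the background $O(|t|)$ dominates and the task is to bound the near-divisor contribution by $O(|t|(\ln\tfrac1{|t|})^\mu)$; when $2l+\tfrac2m<1$ that contribution is genuinely leading.

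Next, on the neighborhood of the critical stratum I would freeze the slowly varying data, replacing $\chi$ by its value on the stratum and $\phi$ by $\phi_0(z')$, and factoring $Z^A=(z_1^{a_1}\cdots z_\mu^{a_\mu})\,(z')^{A'}$; the resulting errors are of higher order and are controlled by Taylor estimates. In the critical variables I pass to polar coordinates and substitute $s_j=r_j^{a_j}$, which converts the radial weight $r_j^{2b_j+1}\,dr_j$ into $\tfrac1{a_j}s_j^{2l_j-1}\,ds_j=\tfrac1{a_j}s_j^{2l-1}\,ds_j$, using crucially that $l_j=l$ for $j\le\mu$. The integrand then depends on the critical variables only through the product $P=\prod_{j\le\mu}s_j$, so the remaining task is to compute the push-forward of $\prod_j s_j^{2l-1}\,ds_j$ under $P$. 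Writing $s_j=e^{-\xi_j}$ turns this into a $\mu$-fold convolution of $e^{-2l\xi}\,d\xi$ on $[0,\infty)$, whose density in $\tau=\ln\tfrac1P$ is proportional to $\tau^{\mu-1}e^{-2l\tau}$; evaluated at the natural scale $P\sim|t|$ this produces exactly $|t|^{2l}(\ln\tfrac1{|t|})^{\mu-1}$, and the modulus power supplies the remaining $|t|^{2/m}$. The same bookkeeping explains the exceptional regime: the contribution reduces to a radial integral $\int_{|t|}^1 P^{2l+2/m-2}(\ln\tfrac1P)^{\mu-1}\,dP$, which for $2l+\tfrac2m<1$ is dominated by $P\sim|t|$ and gives the leading term, while at the borderline $2l+\tfrac2m=1$ it degenerates to $\int P^{-1}(\ln\tfrac1P)^{\mu-1}\,dP\sim(\ln\tfrac1{|t|})^{\mu}$, producing the extra logarithm. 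This step is the one I expect to be the main obstacle: making the freezing uniform across the non-critical directions and proving that the convolution density yields precisely $(\ln\tfrac1{|t|})^{\mu-1}$ (and one higher power at the boundary) rather than an uncontrolled accumulation of logarithms.

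Finally, for the coefficient I would rescale $P=|t|Q$, so that after the angular and radial integrations $c(A,B,\phi)$ appears as a constant multiple of $\int_{z'}G\big(|\phi_0(z')|\big)\,w(z')\,dV(z')$, where $w\ge0$ collects the frozen weights and
\[
G(\rho)=\frac1{2\pi}\int_{\mathbf C}\big(|\zeta+\rho|^{2/m}-|\zeta|^{2/m}\big)\,|\zeta|^{2l-2}\,dA(\zeta).
\]
Because the measure $|\zeta|^{2l-2}\,dA$ is rotation invariant, $G(\rho)$ equals its own average over the argument of $\rho$, namely $\tfrac1{2\pi}\int_{\mathbf C}\big(\langle|\zeta+\rho e^{i\gamma}|^{2/m}\rangle_\gamma-|\zeta|^{2/m}\big)|\zeta|^{2l-2}\,dA$. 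The inner average is the mean of $|\cdot|^{2/m}$ over the circle of radius $|\rho|$ centered at $\zeta$, which by the subharmonicity of $\zeta\mapsto|\zeta|^{2/m}$ (whose Laplacian $\tfrac4{m^2}|\zeta|^{2/m-2}$ is nonnegative) is at least $|\zeta|^{2/m}$, so the integrand is pointwise $\ge0$ and $c(A,B,\phi)\ge0$. The sub-mean-value inequality is strict off $\rho=0$, giving $G(\rho)>0$ for $\rho>0$; hence $c=0$ forces $\phi_0(z')=0$ for almost every $z'$, that is $\phi(0,\dots,0,z_{\mu+1},\dots,z_n)\equiv0$, and conversely this makes every $G(|\phi_0|)$ vanish. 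The convergence of the $\zeta$-integral defining $G$ is guaranteed near $0$ by $l>0$ and near $\infty$ by $2l+\tfrac2m<1$, which is precisely the regime in which $c$ is defined.
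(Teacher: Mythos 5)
The paper does not prove this theorem: it is stated as a quoted result whose proof is deferred to \cite{chi} (``We will state the main local asymptotic result, whose proof can be found in \cite{chi}''). So there is no internal proof to compare your argument against, and your proposal has to be judged on its own. As a strategy it is sound and, in its main lines, matches what such a proof must do: the substitution $s_j=r_j^{a_j}$ turning the weight into $\tfrac1{a_j}s_j^{2l-1}ds_j$, the push-forward of $\prod_j s_j^{2l-1}ds_j$ under $P=\prod_j s_j$ giving the density $P^{2l-1}(\ln\tfrac1P)^{\mu-1}dP$ (this is exactly where the exponent $\mu-1$ comes from, and your radial integral correctly degenerates to $(\ln\tfrac1{|t|})^{\mu}$ at the borderline $2l+\tfrac2m=1$), the rescaling $P=|t|Q$ to extract the coefficient, and the subharmonicity of $\zeta\mapsto|\zeta|^{2/m}$ to get $G(\rho)>0$ for $\rho\neq0$ and hence the characterization $c=0\Leftrightarrow\phi_0\equiv0$. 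That last argument is correct and complete as stated (the sub-mean-value inequality is strict because $|\zeta|^{2/m}$ is harmonic on no open set), and your convergence conditions for $G$ ($l>0$ at the origin, $2l+\tfrac2m<1$ at infinity) are exactly right.

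There are, however, several points where the sketch as written would not survive being made rigorous without repair. First, your opening split into ``bounded away from the divisor'' and ``a shrinking neighborhood'' is dangerous: if the cutoff is at scale $|t|$, the far region contributes $|t|\int_{|t|}^1 P^{2l+2/m-2}(\ln\tfrac1P)^{\mu-1}dP\sim|t|^{2l+2/m}(\ln\tfrac1{|t|})^{\mu-1}$, i.e.\ the same order as the main term, not $O(|t|)$; the cutoff must be at a fixed scale $\epsilon$, with the $\epsilon$-dependence absorbed because the integral defining $G$ converges. Second, the integrand does not depend on the critical variables ``only through $P$'': it depends on the complex number $Z^A$, whose phase varies; you need to average over the angular variables first (the same rotation-invariance you invoke later for positivity) before the push-forward computation applies. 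Third, your formula for the coefficient omits the non-critical part of $A$: when some $a_j\neq0$ for $j>\mu$, the profile is $|(z')^{A'}|^{2/m}G\big(|\phi_0(z')/(z')^{A'}|\big)$, and its integrability in $z'$ near $\{(z')^{A'}=0\}$ (where $G(\rho)\sim C\rho^{2l+2/m}$ produces the weight $|(z')^{A'}|^{-2l}$) uses precisely $l<l_j$ for $j>\mu$; this also governs the claim that strata involving the $l_j>l$ components contribute only lower-order terms, which you assert but do not verify. Finally, the positivity statements require $\chi\geq0$ and, for the equivalence, $\chi$ not identically zero on the critical stratum; this is implicit in the theorem (it is how $\chi$ arises in the application, cf.\ conditions (ii), (iii), (v) of $\mathbf{2.4}$) but your ``$w\geq0$'' should be flagged as an assumption rather than a consequence. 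None of these is fatal, but together with the uniform-freezing issue you yourself identify, they constitute the bulk of the actual work, which is presumably why the authors relegate the proof to \cite{chi}.
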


\begin{rem}
More specific information when $2l+\frac{2}{m}\geq 1$ can be given.
We actually can show that when $k<2l+\frac{2}{m}<k+1$ where $k\in\mathbf{N}$, $\Psi^{(j)}(0)$ exists for $j\leq k$.
In addition, we get in this case an asymptotic expansion for $\Psi (t)-\stackrel{k}{\sum\limits_{j=0}}\frac{\Psi^{j}(0)}{j!}t^j$.
We will not need these in this paper so we omit them here. For more detail see \cite{chi}.
\end{rem}

In $\mathbf{2.4}$ we will apply this result to obtain the main global result. Let
$\eta_0,\eta\in H^0(M,mK_M)$. We hope to describe the asymptotic behavior of
$\langle\langle\eta_0 + t\eta\rangle\rangle$ as $t\to 0$. \\

\subsection{The characteristic index and indicatrix}
Before getting into the deduction of the global asymptotic
expansion, we introduce several quantities measuring how singular
a divisor is at a point in the ambient space.

Let $M$ be a smooth variety, $D$ a nonzero effective divisor on $M$.
(In $\mathbf{2.4}$ $D$ will be chosen to be $\{\eta_0=0\}$ for the $\eta_0\in H^0(M, mK_M)$ we consider.)
We first choose a log resolution
$\pi :\widetilde M\to M$ for the pair $(M,D)$ and write $$\pi^*D=\sum\limits_{E}a_E E\quad\text{and}\quad K_{\widetilde M}=\pi^* K_M+ \sum\limits_{E}b_E E\text{,}$$
where $E$ runs over all irreducible subvarieties of $\widetilde M$ of codimension $1$.
\begin{defn}
$(1)$ For every $x\in M$ the $log\ canonical\ threshold$ of $D$ at $x$, denoted as ${\rm lct}(D,x)$, is given by
$${\rm lct}(D,x)=\min\limits_{\{E|x\in\pi (E)\}}\frac{b_E +1}{a_E}\text{.}$$ We also have the global log canonical threshold of $D$,
$${\rm lct}(D)=\min\limits_E\ \frac{b_E +1}{a_E}\text{.}$$ It is clear that ${\rm lct}(D)=\min\limits_{x\in M}\ {\rm lct}(D,x)$.\\

$(2)$ For every $x\in M$ the $log\ canonical\ multiplicity$ of $D$ at $x$, denoted as $\mu (D,x)$, is given by
$$\mu (D,x)=\max\left\{\ q\ \left|
\begin{array}{l}
\text{There exist distinct irreducible divisors}\ E_1,\dots,E_q\ \text{in}\ \widetilde M\ \\
\text{such that}\ \frac{b_{E_j}+1}{a_{E_j}}={\rm lct}(D,x)\ \text{for all}\ j\ \text{and}\ x\in\pi (\cap E_j)\text{.}\\
\end{array}\right.\right\}\text{.}$$\\

$(3)$ The $characteristic\ index$ of $D$ at $x$ is the pair $({\rm lct}(D,x),\mu (D,x))$. Consider the following total order:$$(l_1,\mu_1)>(l_2,\mu_2)\Longleftrightarrow\left\{
\begin{array}{c}
l_1=l_2\ \text{and}\ \mu_1>\mu_2\\
\text{or}\\
l_1<l_2\\
\end{array}\right.$$ \\
The $global\ characteristic\ index$ of $D$, denoted as $\big({\rm lct}(D),\mu (D)\big)$, is given by
$$\big({\rm lct}(D),\mu (D)\big)=\sup\limits_{p\in M} \big({\rm lct}(D,p),\mu (D,p)\big)\text{.}$$\\

$(4)$ We define the $characteristic\ indicatrix$ of $(M,D)$,
denoted as $C(D)$, to be the set of points achieving global
characteristic index, i.e. $$C(D)=\left\{x\in M\left|\ \big({\rm
lct}(D,x),\mu (D,x)\big)=\big({\rm lct}(D),\mu
(D)\big)\right.\right\}\text{.}$$ Notice that in general $C(D)$ is
different from the minimal log canonical centers of $D$.
\end{defn}

The total order defined here is adapted to the comparison of the
asymptotic order of functions of the form
$|t|^l\big(ln\frac{1}{|t|}\big)^{\mu}$. We have
\begin{align}
|t|^{l'}\left(ln\frac{1}{|t|}\right)^{\mu'}=o\left(|t|^l\big(ln\frac{1}{|t|}\big)^{\mu}\right)\ \text{if}\ (l,\mu)>(l',\mu')\text{.}
\end{align}
Let $\mathcal E$ be the set of all irreducible divisors $E$ such
that $\frac{b_{E} +1}{a_E}={\rm lct}(D)$,
$$\widetilde M_{D,r}=\bigsqcup\limits_{
E_1,\ldots, E_r:\ \text{distinct in}\ \mathcal{E} }\
E_1\cap\dots\cap E_r\text{,}$$ and $\iota_r:\widetilde
M_{D,r}\to\widetilde M$ the canonical morphisms induced by
inclusions. We have
\begin{align}
\mu (D)=\sup\{r|\ \widetilde M_{D,r}\neq\phi\}\ \text{and}\ C(D)=\pi\iota_{\mu (D)}(\widetilde M_{D,\mu (D)}).
\end{align}

The log canonical thresholds is well defined, namely it is
independent of the choice of log resolutions. In fact
$$
{\rm lct}(D,x)=\inf\{c>0|\mathcal J(M,cD)_x\neq \mathcal O_{M,x}\}\text{,}
$$
and the multiplier ideal sheaves $\mathcal J(M,cD)$ do not depend
on the log resolution we choose.

This also gives the basic inequality
\begin{align}
{\rm lct}(D,x)\leq\frac{n}{{\rm mult}_x D}
\end{align}
(by taking a blow-up $Bl_x(M)\to M$ followed by a log resolution).

\begin{rem}
In the rest of the paper we do not need $\mu (D,x)$ and $C(D)$ to
be independent of the choice of log resolution. For each divisor
$D$ we can simply choose a fixed resolution to define $\mu (D,x)$
and $C(D)$. However, they can indeed be defined in terms of some
resolution free ideal sheaves, hence are both independent of the
choice of resolution (see \cite{chi}). Instead of giving a formal
proof of this independence here, we would like to point out that
one can see this, at least analytically, by using Theorem 2.2.1
for the case $\chi=\phi\equiv 1$ and that pulling back a
differential form by an analytic modification does not change its
integral.
\end{rem}

\begin{rem}
The characteristic index is a finer measurement of singularity
than ${\rm lct}$ is. For example, ${\rm lct}$ alone can not tell
between a reduced non-smooth s.n.c.\ divisor and a smooth divisor.
Higher characteristic indices correspond to worse singularities.
In this sense, the characteristic indicatrix $C(D)$ is the set of
points at which the pair $(M, D)$ is the most singular.
\end{rem}

\subsection{The asymptotic property of $\langle\langle\ \rangle\rangle_m$}
In this subsection we assume $M$ to be compact. We first give the local setting.
As in $\mathbf{2.1}$, let $\mathcal{U}=\{(U, (w^j_U)^n_{j=1})\}$ be a finite open
cover of coordinate charts on $M$. We choose a log resolution
$\pi: \widetilde M\to M$ for $(M,D_{\eta_0}=\{\eta_0=0\})$ and a finite
refinement $\mathcal{V}=\{(V, Z_V=X_V+iY_V)\}$ of $\pi^{-1}\mathcal{U}=\{\pi^{-1}U\}$
formed by charts in $\widetilde M$, where $Z_V$ and $(X_V,Y_V)$ abbreviate
$(z^1_V,\dots ,z^n_V)$ and $(x^1_V,y^1_V\dots ,x^n_V,y^n_V)$ respectively. Let $\tau : \mathcal V\to\mathcal U$ be such that $\pi(V)\subset\tau (V)$. Finally, we choose a partition
of unity $\{\chi_V(X_V,Y_V)\}$ subordinate to $\mathcal{V}$. $\mathcal V$ and $\{\chi_V\}$ can be so chosen that\\
\\
(i) the image of $Z_V:V\to\mathbf C^n$ is
$\Delta_0=\{(z_1,\dots ,z_n)\in\mathbf C^n|\ |z_j|<1\ \text{for all}\ j\}$;\\
\\
(ii) if $U=\tau (V)$, then
$$\pi^*(dw^1_U\wedge\dots\wedge dw^n_U)=\big(j_V(Z_V)\big)Z^{B_V}_V\ dz^1_V\wedge\dots\wedge dz^n_V$$ for some nonvanishing $j_V\in\mathcal O (\overline\Delta_0)$ (hence all its derivatives are bounded) and multi-index
$B_V=(b^1_V,\dots ,b^n_V)\in (\mathbf N\cup{0})^n$;\\
\\
(iii) following the notation in (ii), we have $$\pi^*\eta_0=c_V(Z_V)\big(j_V(Z_V)\big)^mZ^{A_V+mB_V}_V (dz^1_V\wedge\dots\wedge dz^n_V)^{\otimes m}$$ and
$$\pi^*\eta=c_V(Z_V)\big(j_V(Z_V)\big)^m\phi_V(Z_V)Z^{mB_V}_V(dz^1_V\wedge\dots\wedge dz^n_V)^{\otimes m}$$
where $\phi_V$ and $c_V\in\mathcal O (\overline\Delta_0)$, $c_V$ is nonvanishing, and $A_V=(a^1_V,\dots ,a^n_V)\in (\mathbf N\cup{0})^n$;\\
\\
(iv) for each $V$ we have $l^1_V=\dots =l^{\mu_V}_V<l^{\mu_V+1}_V\leq\dots\leq l^n_V$, where $l^j_V=\frac{b^j_V+1}{a^j_V}$.\\
\\
(v) $\chi_V (0,0)\neq 0$ for every $V$.

\begin{rem}
In (ii) and (iii) and in the following proof of Theorem 2.4.2 we
will have to consider two different kinds of pullbacks via
$\pi:\widetilde M\to M$ of elements in $H^0(M, K_M)$, and it is
important not to mix them up. The first one is $\pi^*:H^0(M,
mK_M)\to H^0(\widetilde M, mK_{\widetilde M})$ which acts on $K_M$
as the usual pullback of differential forms via the map $\pi$. The
second one is $\pi^{**}:H^0(M, mK_M)\to H^0\big(\widetilde M,
\pi^*(mK_M)\big)$, the usual pullback map from the sections of a
vector bundle to those of its pullback bundle via a map.
\end{rem}

In terms of the $\mathcal V$ and ${\chi_V}$ chosen above we can write
$$\langle\langle\eta_0 + t\eta\rangle\rangle=
\sum\limits_{V\in\mathcal V}\int_{\overline\Delta_0} \big(\chi_V
|c_V|^{\frac{2}{m}}|j_V|^2\big)\big|Z^{A_V}_V +
t\phi_V\big|^{\frac{2}{m}}\big|Z_V\big|^{2B_V}dX_VdY_V\text{.}$$
Our main asymptotic result for $\langle\langle\ \rangle\rangle_m$
is the following
\begin{thm}
Given $\eta_0,\eta\in H^0(M,mK_M)$, let $(l,\mu )=\big({\rm lct}(D_{\eta_0}),\mu (D_{\eta_0})\big)$ and $C(D_{\eta_0})$ be defined as in $\mathbf{2.3}$. We have\\
$$\ \langle\langle\eta_0 + t\eta\rangle\rangle -\langle\langle\eta_0\rangle\rangle=
\left\{
\begin{array}{cr}
O\left(|t|\big(ln\frac{1}{|t|}\big)^{\mu}\right) & \text{if}\ \ 2l+\frac{2}{m}\geq 1\text{;}\\
\\
\begin{array}{l}
c(\eta_0,\eta)\ |t|^{2l+\frac{2}{m}}\big(ln\frac{1}{|t|}\big)^{\mu -1}\\
+o\left(|t|^{2l+\frac{2}{m}}\big(ln\frac{1}{|t|}\big)^{\mu  -1}\right)\end{array} & \text{if}\ \ 2l+\frac{2}{m}<1\text{,}
\end{array} \right.
$$
where $c(\eta_0,\eta)$ is a real number depending on $\eta_0$ and $\eta$. In the last case we have $c(\eta_0,\eta)\geq 0$, and $$c(\eta_0,\eta)=0\ \Longleftrightarrow\ \eta\ \text{vanishes on}\ C(D_{\eta_0})\text{.}$$
\end{thm}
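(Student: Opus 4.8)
The plan is to deduce this global statement from the local expansion of Theorem 2.2.1 applied chart by chart to the decomposition of $\langle\langle\eta_0+t\eta\rangle\rangle$ displayed just above. Each summand is exactly a function $\Psi_V(t)=\int_{\overline\Delta_0}\big(\chi_V|c_V|^{2/m}|j_V|^2\big)\big|Z_V^{A_V}+t\phi_V\big|^{2/m}|Z_V|^{2B_V}\,dX_V\,dY_V$ of the form treated in Theorem 2.2.1, with the smooth weight $\chi$ there replaced by $\chi_V|c_V|^{2/m}|j_V|^2$ (which is smooth since $c_V,j_V$ are nonvanishing, is nonnegative, and is strictly positive at the origin by (v)), with $\phi$ replaced by $\phi_V$, and with multi-indices $A_V,B_V$. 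I attach to each chart its local characteristic index $(l_V,\mu_V)$, where $l_V=\min_j(b^j_V+1)/a^j_V$ and $\mu_V$ counts the coordinate directions attaining this minimum; these are precisely the $l$ and $\mu$ of Theorem 2.2.1. Since the coordinate hyperplanes $\{z^j_V=0\}$ are among the divisors $E$ of the log resolution, $l_V\ge{\rm lct}(D_{\eta_0})=l$, and using the description of $\mu(D)$ and $C(D)$ in $\mathbf{2.3}$ one checks that, in the total order, $\max_V(l_V,\mu_V)=(l,\mu)$, with equality holding precisely for the charts meeting $\widetilde M_{D,\mu}$ (equivalently, whose image meets $C(D_{\eta_0})=\pi\iota_\mu(\widetilde M_{D,\mu})$).

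First I would dispose of the regime $2l+\tfrac2m\ge1$. Here $2l_V+\tfrac2m\ge2l+\tfrac2m\ge1$ for every $V$, so each $\Psi_V(t)-\Psi_V(0)$ lands in the first case of Theorem 2.2.1 and is $O\big(|t|(\ln\tfrac1{|t|})^{\mu_V}\big)$. As $\mu_V\le\mu$ and $\ln\tfrac1{|t|}\to+\infty$, this is $O\big(|t|(\ln\tfrac1{|t|})^{\mu}\big)$, and summing over the finite cover gives the claimed bound.

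The substance is the regime $2l+\tfrac2m<1$, where I would split the charts by which case of Theorem 2.2.1 applies. For charts with $2l_V+\tfrac2m\ge1$ the contribution is $O\big(|t|(\ln\tfrac1{|t|})^{\mu_V}\big)$, which is $o\big(|t|^{2l+2/m}(\ln\tfrac1{|t|})^{\mu-1}\big)$ because the exponent $1-(2l+\tfrac2m)$ is strictly positive and hence kills every power of the logarithm. For charts with $2l_V+\tfrac2m<1$ the second case of Theorem 2.2.1 yields $c(A_V,B_V,\phi_V)\,|t|^{2l_V+2/m}(\ln\tfrac1{|t|})^{\mu_V-1}$ up to a smaller error; since the map $(l,\mu)\mapsto(2l+\tfrac2m,\mu-1)$ is monotone for the total order, the comparison of asymptotic orders established in $\mathbf{2.3}$ shows that every chart with $(l_V,\mu_V)<(l,\mu)$ contributes $o$ of the leading term. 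Thus only the charts with $(l_V,\mu_V)=(l,\mu)$ survive, and summing them gives the leading coefficient $c(\eta_0,\eta)=\sum_{(l_V,\mu_V)=(l,\mu)}c(A_V,B_V,\phi_V)$.

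Finally, nonnegativity of $c(\eta_0,\eta)$ is immediate since each $c(A_V,B_V,\phi_V)\ge0$ by Theorem 2.2.1. For the vanishing criterion, $c(\eta_0,\eta)=0$ forces every summand to vanish, i.e. $\phi_V(0,\dots,0,z_{\mu+1},\dots,z_n)\equiv0$ on each surviving chart. In such a chart the divisors $E_1,\dots,E_\mu\in\mathcal E$ through it are the coordinate hyperplanes $\{z^1_V=0\},\dots,\{z^\mu_V=0\}$, so $\{z^1_V=\cdots=z^\mu_V=0\}=\widetilde M_{D,\mu}\cap V$; and comparing (ii) and (iii) gives $\pi^{**}\eta=c_V\phi_V\,\pi^*\big((dw^1_U\wedge\cdots\wedge dw^n_U)^{\otimes m}\big)$ with $c_V$ nonvanishing. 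Hence the condition says exactly that $\pi^{**}\eta$ vanishes on $\widetilde M_{D,\mu}$, which, since $C(D_{\eta_0})=\pi\iota_\mu(\widetilde M_{D,\mu})$, is equivalent to $\eta$ vanishing on $C(D_{\eta_0})$. I expect the main obstacle to be the bookkeeping that ties the per-chart indices $(l_V,\mu_V)$ to the global invariants $({\rm lct}(D_{\eta_0}),\mu(D_{\eta_0}))$ and $C(D_{\eta_0})$ — in particular verifying $\max_V(l_V,\mu_V)=(l,\mu)$ and controlling the cross-regime error terms — together with keeping the two pullbacks $\pi^*$ and $\pi^{**}$ straight in the vanishing step.
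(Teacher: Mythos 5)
Your proposal is correct and follows essentially the same route as the paper: apply Theorem 2.2.1 chart by chart to the partition-of-unity decomposition, observe via the total order and (2.1) that only the charts with $(l_V,\mu_V)=(l,\mu)$ contribute to the leading coefficient, and translate the vanishing of each $\phi_V(0,\dots,0,z^{\mu+1}_V,\dots,z^n_V)$ into the vanishing of $\pi^{**}\eta$ on $\iota_\mu(\widetilde M_{D,\mu})$ and hence of $\eta$ on $C(D_{\eta_0})$. Your treatment of the cross-regime error terms is in fact more explicit than the paper's, which compresses that bookkeeping into the single appeal to (2.1).
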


\begin{proof} Following the notation at the beginning of $\mathbf{2.2}$, for each $V\in\mathcal{V}$ we obtain correspondingly a pair
$(l_V,\mu_V)$. It is clear that $(l,\mu)=\sup\limits_V (l_V,\mu_V)$ according to the total order we introduced in $\mathbf{2.3}$(3). For each $V$, applying Theorem 2.2.1 to the case $\chi=\chi_V |c_V|^{\frac{2}{m}}|j_V|^2$, $\phi=\phi_V$, $A=A_V$ and $B=B_V$, and then summing up the corresponding asymptotic expansions, we obtain the expected expansion.

For the statement about $c(\eta_0,\eta)$, notice that, by (2.1), only those $V$ with $(l_V,\mu_V)=(l,\mu)$ will
contribute to $c(\eta_0,\eta)$.
More precisely,
$$c(\eta_0,\eta)=\sum\limits_{\{V|(l_V,\mu_V)=(l,\mu)\}}c(A_V,B_V,\phi_V)\text{.}$$
By Theorem 2.2.1 we know that $c(\eta_0,\eta)\geq 0$ and
$$\begin{array}{l}
c(\eta_0,\eta)=0\\
\\
\Leftrightarrow
\begin{array}{l}
c(A_V,B_V,\phi_V)=0\ \text{for all}\ V\\
\text{such that}\ (l_V,\mu_V)=(l,\mu)\\
\end{array}\\
\\
\Leftrightarrow
\begin{array}{l}
\phi_V(0,\dots ,0,z^{\mu +1}_V,\dots ,z^n_V)\equiv 0\\
\text{for all}\ V\ \text{such that}\ (l_V,\mu_V)=(l,\mu)\text{.}
\end{array}
\end{array}$$
We know that $\iota_{\mu}(\widetilde M_{D,\mu})$ (see (2.2)) is defined by $z^1_V=\dots =z^{\mu}_V=0$ in every such $V$. Regarding the conditions (ii) and (iii) above satisfied by the $\mathcal V$ we choose, the last statement is equivalent to saying that $p^{**}\eta$ vanishes on $\iota_{\mu}(\widetilde M_{D,\mu})$. This is the same as saying that $\eta$ vanishes on $\pi\iota_{\mu}(\widetilde M_{D,\mu})=C(D_{\eta_0})$.
\end{proof}
\section{Identifying the Images of Rational Maps $\varphi_{|mK_M|}$}
We still assume $M$ to be compact. In this section we are going to use Theorem 2.4.2 to study the image of
the rational map $\varphi=\varphi_{|mK_M|}$ associated to the linear system $|mK_M|$.

Let $B=Bs|mK_M|$. First we recall the definition of $\varphi$. It is given by
$$
\begin{array}{cccl}
\varphi: & M & \dashrightarrow & \mathbb{P}H^0(M,mK_M)^*\\
\\
& x & \longmapsto &
\begin{array}{c}
\big\{\eta\in H^0(M,mK_M)\ \big|\ \eta (x)=0\big\}\ \text{viewed}\\
\text{as a hyperplane of}\ H^0(M,mK_M)\text{.}
\end{array}
\end{array}
$$
Notice that $\varphi$ is defined only for $x\in M-B$. (Otherwise $\{\eta|\ \eta (x)=0\}=H^0(M,mK_M)$ is not a hyperplane.)
In general, for any hyperplane $H$ in $H^0(M,mK_M)$ we have
$$
H\stackrel{\alpha}{\subseteq}\{\eta|\ \eta_{|_{Bs|H|}}\equiv 0\}=\{\eta|\ \eta_{|_{Bs|H|-B}}\equiv 0\}
\stackrel{\beta}{\subseteq} H^0(M, mK_M)$$
and $Bs|H|-B=\varphi^{-1}(H)$.
Therefore
\begin{align}
\begin{array}{l}
H\ \text{is in the image of}\ \varphi\Longleftrightarrow Bs|H|-B\neq\phi\Longleftrightarrow\beta\ \text{is}\subsetneq\ \\
\\
\Longleftrightarrow\ \alpha\ \text{is an equality}\ \Longleftrightarrow H=\{\eta|\ \eta_{|_{Bs|H|-B}}\equiv 0\}\text{.}
\end{array}
\end{align}

\noindent {\bf Question:} Given $H$ in the image of $\varphi$, can
we characterize $H$ by a subset of the hyperplane in $H^0(M,mK_M)$
it represents and metrical properties of $\langle\langle\
\rangle\rangle$? \smallskip

\noindent {\bf Idea:} If we can find $\eta_0\in H^0(M,mK_M)$ such
that $2{\rm lct}(D_{\eta_0})+\frac{2}{m}<1$ and $\phi\neq
C(D_{\eta_0})-B\subseteq\ Bs|H|-B$, then
$$
\begin{array}{l}
H\stackrel{(3.1)}{=}\{\eta|\ \eta_{|_{Bs|H|-B}}\equiv 0\}\subseteq\{\eta|\ \eta_{|_{C(D_{\eta_0})-B}}\equiv 0\}\\
\\
\subsetneq H^0(M, mK_M)\ \text{since}\ C(D_{\eta_0})-B\neq\phi\text{,}
\end{array}$$ hence the $\subseteq$ above is actually an equality, and by Theorem 2.4.2
$$H=\{\eta|\ \eta_{|_{C(D_{\eta_0})-B}}\equiv 0\}=\{\eta|\ \eta_{|_{C(D_{\eta_0})}}\equiv 0\}=\{\eta|\ c(\eta_0,\eta)=0\}\text{.}$$
We know that $c(\cdot,\cdot)$ can be read off from $\langle\langle\ \rangle\rangle$.\\
\begin{defi}
We say that property $\rm (CS)$ (standing for "concentrating singularities") holds for $mK_M$ if for a generic $H$ in the image of $\varphi$
there exists $\eta_0\in H^0(M,mK_M)$ such that $2{\rm lct}(D_{\eta_0})+\frac{2}{m}<1$ and $\phi\neq C(D_{\eta_0})-B\subseteq Bs|H|-B$.
\end{defi}

The following is the main ingredient in using metrical properties of pseudonorms to identify images of rational maps of the form we consider above.

\begin{lemm}
Let $M$, $M'$ be compact complex manifolds. If $\rm (CS)$ holds for both $mK_M$ and $mK_{M'}$ and
$$\iota:\big(H^0(M,mK_M),\langle\langle\ \rangle\rangle_m\big)\to\big(H^0(M',mK_{M'}),\langle\langle\ \rangle\rangle_m\big)$$
is a linear isometry, then the isomorphism induced by $\iota$,
$$I:\mathbb{P}H^0(M,mK_M)^*\to\mathbb{P}H^0(M',mK_{M'})^*\text{,}$$
maps the closure of the image of $\varphi_{|mK_M|}$ isomorphically onto that of $\varphi_{|mK_{M'}|}$.
\end{lemm}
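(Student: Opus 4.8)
The plan is to treat the coefficient $c(\eta_0,\eta)$ of Theorem 2.4.2, together with the dichotomy $2{\rm lct}(D_{\eta_0})+\frac{2}{m}<1$ versus $\geq 1$ that governs it, as invariants of the metric $\langle\langle\ \rangle\rangle$, hence as data preserved by $\iota$. Since $\iota$ is a linear isometry we have $\langle\langle\iota(\xi)\rangle\rangle=\langle\langle\xi\rangle\rangle$ for every $\xi$, so writing $\eta_0'=\iota(\eta_0)$ and $\eta'=\iota(\eta)$, linearity gives $\langle\langle\eta_0'+t\eta'\rangle\rangle=\langle\langle\iota(\eta_0+t\eta)\rangle\rangle=\langle\langle\eta_0+t\eta\rangle\rangle$, so the two functions $t\mapsto\langle\langle\eta_0+t\eta\rangle\rangle$ and $t\mapsto\langle\langle\eta_0'+t\eta'\rangle\rangle$ are identical. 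First I would compare their expansions from Theorem 2.4.2: the regime $2{\rm lct}(D_{\eta_0})+\frac{2}{m}<1$ is detected metrically because its sharp leading term $|t|^{2l+2/m}(\ln\frac{1}{|t|})^{\mu-1}$ decays strictly more slowly than the bound $O(|t|(\ln\frac{1}{|t|})^{\mu})$ of the complementary regime, by the order comparison (2.1). Choosing an $\eta$ not vanishing on $C(D_{\eta_0})$ — which exists because ${\rm (CS)}$ gives $C(D_{\eta_0})-B\neq\emptyset$, so $C(D_{\eta_0})\not\subseteq B$ — yields $c(\eta_0,\eta)>0$ and forces $D_{\eta_0'}$ into the same regime $2{\rm lct}(D_{\eta_0'})+\frac{2}{m}<1$; matching leading terms then gives $c(\eta_0,\eta)=c(\eta_0',\eta')$ for every $\eta$, and in particular the equivalence $c(\eta_0,\eta)=0\Leftrightarrow c(\eta_0',\eta')=0$.

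Next I would implement the Idea preceding Definition 3.1. For a generic $H$ in the image of $\varphi_{|mK_M|}$, property ${\rm (CS)}$ supplies $\eta_0$ with $2{\rm lct}(D_{\eta_0})+\frac{2}{m}<1$ and $\emptyset\neq C(D_{\eta_0})-B\subseteq Bs|H|-B$, whence $H=\{\eta\mid c(\eta_0,\eta)=0\}$ by Theorem 2.4.2 and (3.1). Identifying points of the projective spaces with hyperplanes one has $I(H)=\iota(H)$, and the equivalence above rewrites this as
$$I(H)=\{\eta'\mid c(\eta_0',\eta')=0\}=\{\eta'\mid\eta'|_{C(D_{\eta_0'})}\equiv 0\},$$
the second equality being Theorem 2.4.2 on $M'$, legitimate because $2{\rm lct}(D_{\eta_0'})+\frac{2}{m}<1$. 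As $\iota$ is an isomorphism, $I(H)$ is a genuine hyperplane, so some section does not vanish on $C(D_{\eta_0'})$; hence, with $B'=Bs|mK_{M'}|$, we get $C(D_{\eta_0'})-B'\neq\emptyset$, while every member of $I(H)$ vanishes on $C(D_{\eta_0'})$. Thus $\emptyset\neq C(D_{\eta_0'})-B'\subseteq Bs|I(H)|-B'$, and (3.1) for $M'$ shows $I(H)$ lies in the image of $\varphi_{|mK_{M'}|}$.

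Finally I would pass to closures. The $H$ treated above range over a dense subset of the image of $\varphi_{|mK_M|}$, and $I$, being the restriction of a linear isomorphism of projective spaces, is a homeomorphism; therefore $I$ maps the closure of the image of $\varphi_{|mK_M|}$ into that of $\varphi_{|mK_{M'}|}$. Running the same argument for the linear isometry $\iota^{-1}$ — which is where ${\rm (CS)}$ for $mK_{M'}$ enters — produces the opposite inclusion, so $I$ carries the closure of the image of $\varphi_{|mK_M|}$ onto the closure of the image of $\varphi_{|mK_{M'}|}$, and as the restriction of an isomorphism of projective spaces it does so isomorphically.

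The step I expect to be the main obstacle is the metric detection of the analytic data in the first paragraph: because Theorem 2.4.2 provides only an upper bound $O(|t|(\ln\frac{1}{|t|})^{\mu})$ when $2l+\frac{2}{m}\geq 1$ and a genuine leading term only when $2l+\frac{2}{m}<1$, one must argue carefully that no cancellation across the summands of $\langle\langle\eta_0+t\eta\rangle\rangle$ lets the former regime imitate the latter, so that the presence of a nonzero $c(\eta_0,\eta)$ and its value are unambiguously visible in the metric and hence transported by $\iota$. Everything else is formal bookkeeping with (3.1) and the characteristic indicatrix.
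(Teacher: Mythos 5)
Your proposal is correct and follows essentially the same route as the paper's proof: use $\rm (CS)$ to produce $\eta_0$ with $H=\{\eta\mid c(\eta_0,\eta)=0\}$, transport $c(\cdot,\cdot)$ through the isometry, and combine Theorem 2.4.2 with $(3.1)$ to place $I(H)$ in the image of $\varphi_{|mK_{M'}|}$, finishing by density and symmetry. The only differences are cosmetic: the paper establishes $C(D_{\iota\eta_0})\nsubseteq B'$ by contradiction (every section vanishes on $B'$, so $I(H)$ would be all of $H^0(M',mK_{M'})$ rather than a hyperplane), whereas you argue the same point directly, and you spell out the regime-detection step that the paper compresses into the phrase that $\iota\eta_0$ ``has the same asymptotic behavior'' as $\eta_0$.
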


\begin{proof}
By symmetry, it suffices to prove that $I$ maps a generic point in
the image of $\varphi_{|mK_M|}$ into that of
$\varphi_{|mK_{M'}|}$.

By $\rm (CS)$, for a generic $H$ in the image of $\varphi$ we
select a section $\eta_0\in H^0(M,mK_M)$ such that $2{\rm
lct}(D_{\eta_0})+\frac{2}{m}<1$ and $\phi\neq
C(D_{\eta_0})-B=Bs|H|-B$. We already know from $\mathbf{Idea}$
that $H=\left\{\eta\in H^0(M,mK_M)\big|\
c(\eta_0,\eta)=0\right\}$.

By the definition of $I$ and the fact that $\iota$ is a linear
isometry,
$$\begin{array}{rl}
I(H)=&\left\{\iota(\eta)\in H^0(M',mK_{M'})\big|\ c(\eta_0,\eta)=0\right\}\\
\\
=&\left\{\iota(\eta)\in H^0(M',mK_{M'})\big|\ c(\iota\eta_0,\iota\eta)=0\right\}\\
\\
=&\left\{\eta'\in H^0(M',mK_{M'})\big|\ c(\iota\eta_0,\eta')=0\right\}\text{.}
\end{array}$$
By the first $\Longleftrightarrow$ in $(3.1)$, showing that $I(H)$ is in the image of $\varphi'=\varphi_{|mK_{M'}|}$
is equivalent to showing that $Bs|I(H)|-B'\neq\phi$, where $B'=Bs|mK_{M'}|$.

By Theorem 2.4.2, $C(D_{\iota\eta_0})\subseteq Bs|I(H)|$, hence it
suffices to prove $C(D_{\iota\eta_0})\nsubseteq B'$. Assume this
to be false, i.e.~$C(D_{\iota\eta_0})\subseteq B'$. Since $\iota$
is an isometry, $\iota\eta_0$ has the same asymptotic behavior as
that of $\eta_0$, hence $2{\rm
lct}(D_{\iota\eta_0})+\frac{2}{m}=2{\rm
lct}(D_{\eta_0})+\frac{2}{m}<1$. Theorem 2.4.2 then implies that
$\left\{\eta'\big|\
c(\iota\eta_0,\eta')=0\right\}=H^0(M',mK_{M'})$, which is also
$I(H)$ as shown in last paragraph, a contradiction.
\end{proof}
A more general image identifying result using the pseudonorms can be found in \cite{chi} and \cite{cy}.
\begin{rema}
The more detailed asymptotic expansions which are mentioned in
Remark 2.2.2 actually allow us to remove the condition $2{\rm
lct}(D_{\eta_0})+\frac{2}{m}<1$ in the definition of $\rm (CS)$.
This is useful in getting better uniform bounds for the results in
$\mathbf{4}$. See \cite{chi}.
\end{rema}

\section{Birational Equivalence between Smooth Varieties of General Type}
In this section $M$ will be a smooth compact complex manifold such
that the rational map $\varphi_{|mK_M|}$ maps $M$ to its image
birationally for sufficiently large $m$. We want to know for which
$r\in\mathbf{N}$ $\rm (CS)$ holds for $rK_M$.

In case $rK_M$ maps $M$ birationally to its image, the condition
$\rm (CS)$ admits an equivalent statement in terms of points in
$M$ instead of those in the image. It is clear that in this case
$\rm (CS)$ can be restated in the following way:\medskip

\noindent $\rm (CS)$ For a generic point $x$ in $M$ there exists
$\eta_0\in H^0(M, rK_M)$ such that
$$({\rm lct}(D_{\eta_0},x),\mu (\eta_0,x))>({\rm lct}(D_{\eta_0},y),\mu (\eta_0,y))$$ for any $y\neq x$
and ($2{\rm lct}(D_{\eta_0})+\frac{2}{m}=$) $2{\rm lct}(D_{\eta_0},x)+\frac{2}{m}<1$.
\begin{defi} $(i)$ For any $x\in M$ we define
$$V(r,x)=\left\{\eta\in H^0(M,rK_M)\big|\ {\rm mult}_x\eta\geq\frac{2nr}{r-2}\right\}\text{.}$$
$(ii)$
$$\mathcal S_M=\left\{3\leq r\in\mathbf{N}\left|
\begin{array}{l}
(i)\ Bs|rK_M|=\phi\text{, and}\\
(ii)\ \text{for a generic }x\in M\ Bs|V(r,x)|=\{x\}\text{.}\\
\end{array}
\right.\right\}$$
\end{defi}
\begin{rema}
$(ii)$ in particular implies that $\varphi=\varphi_{|rK_M|}$ maps
$M$ birationally to its image. Indeed for a generic $x\in M$
$$\begin{array}{l}\varphi^{-1}\varphi (x)=\varphi^{-1}(\{\eta\in H^0(M,rK_M)|\eta (x)=0\})\\
\\
=Bs\big|\{\eta\in H^0(M,rK_M)|\eta (x)=0\}\big|\subseteq Bs|V(r,x)|=\{x\}\text{.}
\end{array}$$
\end{rema}

\begin{lemm}
If $r\in\mathcal S_M$ then $\rm (CS)$ holds for $rK_M$.
\end{lemm}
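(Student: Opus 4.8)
The plan is to verify, for a generic $x\in M$, the point-wise reformulation of $\rm (CS)$ recorded just above Definition 4.1, by taking $\eta_0$ to be a \emph{generic} member of the linear system $V(r,x)$. This reformulation is legitimate because condition (ii) of $\mathcal S_M$ forces $\varphi_{|rK_M|}$ to be birational onto its image (Remark 4.1), while condition (i) gives $B=Bs|rK_M|=\phi$, so the subtraction of $B$ appearing in Definition 3.1 is vacuous and the two forms of $\rm (CS)$ agree (note $C(D_{\eta_0})-B=C(D_{\eta_0})$ and $Bs|H|-B=\varphi^{-1}(H)=\{x\}$ for generic $H=\varphi(x)$). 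For the chosen $\eta_0$ there are exactly two things to check: (a) that the characteristic index of $D_{\eta_0}$ is strictly largest at $x$, i.e. $({\rm lct}(D_{\eta_0},x),\mu(\eta_0,x))>({\rm lct}(D_{\eta_0},y),\mu(\eta_0,y))$ for all $y\neq x$, equivalently $C(D_{\eta_0})=\{x\}$; and (b) the numerical condition $2\,{\rm lct}(D_{\eta_0},x)+\tfrac2r<1$.

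For (a) I would combine condition (ii) with Bertini's theorem. Since $Bs|V(r,x)|=\{x\}$, the system $V(r,x)$ is base-point free on $M\setminus\{x\}$, so over $\mathbf C$ a generic $\eta_0\in V(r,x)$ has $D_{\eta_0}$ smooth on all of $M\setminus\{x\}$; hence for every $y\neq x$ the divisor either misses $y$ or is smooth there, giving ${\rm lct}(D_{\eta_0},y)\geq 1$. At $x$, on the other hand, membership $\eta_0\in V(r,x)$ forces ${\rm mult}_x\eta_0\geq\frac{2nr}{r-2}$, and the basic inequality (2.3) yields
$$
{\rm lct}(D_{\eta_0},x)\ \leq\ \frac{n}{{\rm mult}_x\eta_0}\ \leq\ \frac{r-2}{2r}\ <\ \tfrac12\ <\ 1\ \leq\ {\rm lct}(D_{\eta_0},y)\qquad(y\neq x).
$$
Because the total order of Definition 2.3.1(3) ranks a strictly smaller log canonical threshold strictly higher, the index at $x$ dominates that at every $y\neq x$ regardless of the multiplicities $\mu(\eta_0,\cdot)$; by the definition of the characteristic indicatrix (Definition 2.3.1(4)) this says precisely $C(D_{\eta_0})=\{x\}$.

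The bound ${\rm lct}(D_{\eta_0},x)\leq\frac{r-2}{2r}$ also gives $2\,{\rm lct}(D_{\eta_0},x)+\frac2r\leq 1$, so the real content of (b), and the step I expect to be the main obstacle, is upgrading this to a \emph{strict} inequality. Strictness can fail already at the level of (2.3): equality ${\rm lct}(D_{\eta_0},x)=\frac{n}{{\rm mult}_x\eta_0}$ does occur—for instance when $D_{\eta_0}$ has an ordinary singularity (smooth projectivized tangent cone) at $x$, where ${\rm lct}=\min(1,\tfrac{n}{{\rm mult}})=\tfrac{n}{{\rm mult}}$ since here ${\rm mult}_x\eta_0\geq\frac{2nr}{r-2}>n$. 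This is rescued as follows. As ${\rm mult}_x\eta_0$ is an integer equal to the generic multiplicity $\min_{\eta\in V(r,x)}{\rm mult}_x\eta$, whenever $\frac{2nr}{r-2}\notin\mathbf Z$ (that is, $(r-2)\nmid 4n$) we automatically get ${\rm mult}_x\eta_0\geq\lceil\tfrac{2nr}{r-2}\rceil>\frac{2nr}{r-2}$ and the inequality becomes strict. For the remaining $r$ one must either rule out that the generic member of $V(r,x)$ acquires an ordinary singularity of exactly that multiplicity at $x$, or—more robustly—invoke the sharper asymptotic expansions of Remark 2.2.2, which by Remark 3.1 permit dropping the condition $2\,{\rm lct}(D_{\eta_0})+\frac2r<1$ from $\rm (CS)$ entirely. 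With (a) and the (possibly relaxed) condition (b) in hand, $\eta_0$ witnesses $\rm (CS)$ for $rK_M$.
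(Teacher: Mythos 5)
Your proof is essentially the paper's own: the same generic $\eta_0\in V(r,x)$ obtained from condition $(ii)$ and Bertini's theorem, the same use of inequality $(2.3)$ at $x$ played against smoothness of $D_{\eta_0}$ away from $x$, and the same comparison of characteristic indices under the order of Definition 2.3.1(3). The one point where you diverge is the strictness of $2\,{\rm lct}(D_{\eta_0},x)+\frac{2}{r}<1$: the paper simply writes $2\,{\rm lct}(D_{\eta_0},x)+\frac{2}{r}<\frac{2n(r-2)}{2nr}+\frac{2}{r}=1$, asserting a strict inequality where its displayed estimates (${\rm lct}\leq n/{\rm mult}_x\eta_0$ and ${\rm mult}_x\eta_0\geq\frac{2nr}{r-2}$) only yield $\leq$, so your worry is a legitimate catch of an imprecision in the paper rather than a gap in your own argument. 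Your repair is sound: integrality of the multiplicity gives ${\rm mult}_x\eta_0\geq\lceil\frac{2nr}{r-2}\rceil>\frac{2nr}{r-2}$ whenever $(r-2)\nmid 4n$ (which in particular holds for the large $r$ that Lemmas 4.3 and 4.4 actually feed into this lemma), and Remark 3.1 is the correct fallback for the remaining boundary cases.
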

\begin{proof}
$(ii)$ and Bertini's theorem imply that for a generic $x\in M$
there is $\eta_0\in H^0(M,rK_M)$ such that ${\rm
mult}_x\eta_0\geq\frac{2nr}{r-2}$ and ${\rm mult}_y\eta_0\leq 1$
for $y\neq x$. This shows that ${\rm lct}(D_{\eta_0},y)=1$ or
$\infty$ and by (2.3) that ${\rm
lct}(D_{\eta_0},x)\leq\frac{n}{{\rm
mult}_x\eta_0}<\frac{r-1}{2r}=\frac{1}{2}-\frac{1}{r}<\frac{1}{2}$.
It is clear from Definition 2.3.1(3) that $$({\rm
lct}(D_{\eta_0},x),\mu (\eta_0,x))>({\rm lct}(D_{\eta_0},y),\mu
(\eta_0,y))\text{.}$$ Besides, $2{\rm
lct}(D_{\eta_0},x)+\frac{2}{r}<\frac{2n(r-2)}{2nr}+\frac{2}{r}=1$.
\end{proof}

\begin{lemm}
$\mathcal{S}_M$ is a semigroup, i.e. $(r_1+r_2)\in\mathcal{S}_M$ if $r_1$, $r_2\in\mathcal{S}_M$.
\end{lemm}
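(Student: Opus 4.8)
The plan is to verify the two defining conditions of Definition 4.1 for $r:=r_1+r_2$ directly, the only tool being the multiplication map on pluricanonical sections
\[
H^0(M,r_1K_M)\times H^0(M,r_2K_M)\longrightarrow H^0(M,rK_M),\qquad (\eta_1,\eta_2)\longmapsto \eta_1\eta_2,
\]
combined with additivity of vanishing order, ${\rm mult}_x(\eta_1\eta_2)={\rm mult}_x\eta_1+{\rm mult}_x\eta_2$. I would first dispose of condition $(i)$: for any $x\in M$, base-point-freeness of $|r_1K_M|$ and $|r_2K_M|$ gives $\eta_i\in H^0(M,r_iK_M)$ with $\eta_i(x)\neq0$, so $\eta_1\eta_2\in H^0(M,rK_M)$ is nonzero at $x$; hence $Bs|rK_M|=\emptyset$.

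The step I expect to carry the argument is an elementary superadditivity of the threshold defining $V(\cdot,x)$. Setting $f(s)=\tfrac{s}{s-2}=1+\tfrac{2}{s-2}$ for $s>2$, I would check
\[
f(r_1)+f(r_2)-f(r_1+r_2)=1+\frac{2}{r_1-2}+\frac{2}{r_2-2}-\frac{2}{r_1+r_2-2}>0,
\]
the positivity being clear since $\tfrac{2}{r_1-2}>\tfrac{2}{r_1+r_2-2}$ and the remaining terms are positive. Multiplying by $2n$ yields $\tfrac{2nr_1}{r_1-2}+\tfrac{2nr_2}{r_2-2}\ge\tfrac{2nr}{r-2}$. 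Consequently, if $\eta_i\in V(r_i,x)$ then ${\rm mult}_x(\eta_1\eta_2)\ge\tfrac{2nr_1}{r_1-2}+\tfrac{2nr_2}{r_2-2}\ge\tfrac{2nr}{r-2}$, i.e.\ $\eta_1\eta_2\in V(r,x)$; in short, multiplication sends $V(r_1,x)\times V(r_2,x)$ into $V(r,x)$.

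Granting this, I would establish $(ii)$ as follows. Each $V(r_i,x)$ is a linear subspace (vanishing to a fixed order at $x$ is a linear condition), and since $\tfrac{2nr}{r-2}>0$ every section of $V(r,x)$ vanishes at $x$, so $x\in Bs|V(r,x)|$ automatically. For the reverse containment I would take $x$ in the intersection of the two generic loci on which $(ii)$ holds for $r_1$ and for $r_2$, so that $Bs|V(r_1,x)|=Bs|V(r_2,x)|=\{x\}$; this intersection is still generic. Then for any $y\neq x$, genericity furnishes $\eta_1\in V(r_1,x)$ and $\eta_2\in V(r_2,x)$ with $\eta_1(y)\neq0$ and $\eta_2(y)\neq0$, whence $\eta_1\eta_2\in V(r,x)$ is nonzero at $y$ and $y\notin Bs|V(r,x)|$. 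This gives $Bs|V(r,x)|=\{x\}$ for generic $x$, so $r\in\mathcal{S}_M$.

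The conceptual content is thus concentrated in the superadditivity inequality, which is where I would be most careful; the remaining points are routine but worth checking, namely that a product of nonzero holomorphic sections on the connected manifold $M$ is again nonzero (so $|V(r,x)|$ is a nonempty linear system and the base-locus statements are meaningful), that the two genericity conditions can be imposed at once, and that $r=r_1+r_2\ge3$ lies in the admissible range, which holds since $r_1,r_2\ge3$.
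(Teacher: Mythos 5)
Your proof is correct and follows essentially the same route as the paper: multiply sections $\eta_1\in V(r_1,x)$ and $\eta_2\in V(r_2,x)$ not vanishing at a given $y\neq x$, and use additivity of ${\rm mult}_x$ together with the superadditivity inequality $\frac{2nr_1}{r_1-2}+\frac{2nr_2}{r_2-2}>\frac{2n(r_1+r_2)}{r_1+r_2-2}$ (which the paper states without proof and you verify) to conclude $\eta_1\eta_2\in V(r_1+r_2,x)$. The only cosmetic difference is that the paper invokes Bertini's theorem to produce the $\eta_j$, whereas you correctly observe that the definition of the base locus already furnishes them.
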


\begin{proof}
Condition $(i)$ obviously holds for $(r_1+r_2)$ if it does for
$r_1$ and $r_2$. As for condition $(ii)$, for $x$ in some Zariski
open subset $U\subseteq M$ we have
$Bs|V(r_1,x)|=Bs|V(r_2,x)|=\{x\}$ since $r_1$ and $r_2\in\mathcal
S_M$. We want to show that for $x\in U$, $y\notin
Bs|V(r_1+r_2,x)|$ if $y\neq x$. By Bertini's theorem we can find
$\eta_j\in V(r_j,x)$ such that $\eta_j(y)\neq 0$ for $j=1,2$. Let
$\eta=\eta_1\otimes\eta_2\in H^0(M, (r_1+r_2)K_M)$. We have
$${\rm mult}_x\eta={\rm mult}_x\eta_1+{\rm mult}_x\eta_2>\frac{2nr_1}{r_1-2}+\frac{2nr_2}{r_2-2}>\frac{2n(r_1+r_2)}{r_1+r_2-2}$$
by the fact that $\frac{x+y}{x+y-2}<\frac{x}{x-2}+\frac{y}{y-2}$ if $x$, $y\geq 3$.
Therefore $\eta\in V(r_1+r_2,x)$ and $\eta(y)\neq 0$. So $y\notin Bs|V(r_1+r_2,x)|$.
\end{proof}
\begin{lemm}
Suppose $Bs|mK_M|=\phi$ and $\varphi_{|mK_M|}$ maps $M$ onto its
image in $\mathbb{P}H^0(M,mK_M)^*$ birationally. Then $\nu
m\in\mathcal S_M$ for any integer $\nu\geq\ 2n+1$.
\end{lemm}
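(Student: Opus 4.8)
The statement to prove is Lemma 4.5: if $Bs|mK_M|=\phi$ and $\varphi_{|mK_M|}$ is birational onto its image, then $\nu m\in\mathcal S_M$ for all integers $\nu\geq 2n+1$. The plan is to verify the two defining conditions of $\mathcal S_M$ (Definition 4.1(ii)) for $r=\nu m$: namely (i) $Bs|\nu m K_M|=\phi$, and (ii) for generic $x\in M$, $Bs|V(\nu m,x)|=\{x\}$. Condition (i) is almost free: since $Bs|mK_M|=\phi$, the multiplication map gives sections of $\nu m K_M$ with no common base point (the $\nu$-fold tensor powers of a basepoint-free system are basepoint-free), so $Bs|\nu m K_M|=\phi$. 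The real content is condition (ii), and here the natural idea is to build the required highly-singular section at $x$ as a tensor product of pullbacks of hyperplane sections under the birational embedding $\varphi=\varphi_{|mK_M|}$.

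**Constructing sections with high multiplicity at $x$.**
First I would fix a generic $x\in M$ lying in the open locus where $\varphi$ is an isomorphism onto its image, so that $x$ is not a base point and $\varphi$ separates $x$ from nearby points and directions. Because $\varphi$ embeds a neighborhood of $x$, I can choose hyperplanes $H_1,\dots,H_k$ in $\mathbb P H^0(M,mK_M)^*$ passing through $\varphi(x)$ whose pullbacks $\sigma_i\in H^0(M,mK_M)$ vanish at $x$ and whose differentials $d\sigma_i$ span the cotangent space $T^*_xM$; concretely, choosing $n$ hyperplanes meeting transversally at $\varphi(x)$ yields sections cutting out $x$ as a smooth point with independent linear parts. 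Tensoring $\nu$ such sections together produces $\eta_0\in H^0(M,\nu m K_M)$ with $\mathrm{mult}_x\eta_0\geq\nu$ (one from each factor). The key arithmetic check is that $\nu\geq 2n+1$ forces
$$
\mathrm{mult}_x\eta_0\geq\nu\geq 2n+1>\frac{2n\cdot\nu m}{\nu m-2}
$$
once $\nu m\geq 3$, since $\tfrac{2n\cdot r}{r-2}\to 2n$ as $r\to\infty$ and is $<2n+1$ for $r$ large; thus $\eta_0\in V(\nu m,x)$ for such $x$, showing $V(\nu m,x)$ is nonempty.

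**Showing $Bs|V(\nu m,x)|=\{x\}$.**
The containment $x\in Bs|V(\nu m,x)|$ is automatic since every element of $V(\nu m,x)$ vanishes at $x$. For the reverse, given $y\neq x$ I must produce some $\eta\in V(\nu m,x)$ with $\eta(y)\neq 0$. Using that $\varphi$ separates $x$ from $y$ (again by birationality/embedding on the generic locus, so $\varphi(x)\neq\varphi(y)$), I can pick each pullback factor $\sigma_i$ above to additionally satisfy $\sigma_i(y)\neq 0$ — a generic hyperplane through $\varphi(x)$ misses $\varphi(y)$. The tensor product $\eta=\sigma_1\otimes\cdots\otimes\sigma_\nu$ then still has $\mathrm{mult}_x\eta\geq\nu$ so lies in $V(\nu m,x)$, yet $\eta(y)=\prod_i\sigma_i(y)\neq 0$. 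Hence $y\notin Bs|V(\nu m,x)|$, and together with condition (i) this gives $\nu m\in\mathcal S_M$.

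**Main obstacle.**
The step I expect to require the most care is guaranteeing the multiplicity bound $\mathrm{mult}_x\eta_0\geq\nu$ is genuinely achievable with sections that also have controlled (indeed nonvanishing) behavior at an arbitrary second point $y$, uniformly over a Zariski-open set of $x$. The transversality/genericity arguments are standard Bertini-type inputs, but the honest bookkeeping is making the single choice of $\nu$ (the bound $2n+1$) simultaneously force $\nu>\tfrac{2n\cdot\nu m}{\nu m-2}$ and produce enough independent vanishing directions; here the precise inequality $\tfrac{r}{r-2}$ comparison from Lemma 4.4 and the embedding dimension $n$ must be reconciled, and verifying that the generic locus where $\varphi$ is an isomorphism is the same locus on which both the high-multiplicity construction and the separation of $y$ succeed is where the argument must be assembled carefully rather than quoted.
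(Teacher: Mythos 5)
Your proposal is correct and follows essentially the same route as the paper: use the locus where $\varphi_{|mK_M|}$ is an isomorphism onto its image to find, for generic $x$ and any $y\neq x$, a section of $mK_M$ vanishing at $x$ but not at $y$, tensor $\nu$ such factors to get an element of $V(\nu m,x)$ nonvanishing at $y$, and check the inequality $\nu\geq 2n+1>\frac{2n\nu m}{\nu m-2}$. The only cosmetic difference is that the paper simply takes the $\nu$-th tensor power $\eta^{\otimes\nu}$ of a single separating section (so the transversality/independent-linear-parts discussion in your construction is unnecessary --- multiplicity $\geq 1$ in each factor already suffices).
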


\begin{proof}
Condition $(i)$ in the definition of $\mathcal S_M$ obviously holds.
Only $(ii)$ needs to be verified.

Since $\varphi_{|mK_M|}$ maps $M$ to its image birationally, we
can find Zariski open subsets $U_0$ and $U$ of $M$ and the image
of $\varphi_{|mK_M|}$ respectively such that
$\varphi_{|mK_M|}:U_0=\varphi^{-1}_{|mK_M|}(U)\mathop{\to}\limits^\sim
U$.

We want to show that $y\notin Bs|V(\nu m,x)|$ if $y\neq x$ (i.e.
$(ii)$) for $x\in U_0$. By the choice of $U_0$ it is clear that
$Bs\big|\{\eta\in H^0(M,mK_M)|\ \eta (x)=0\}\big|=\{x\}$.
Therefore, for any $y\neq x$ there exists $\eta\in H^0(M,mK_M)$
such that $\eta (x)=0$ and $\eta (y)\neq 0$. Taking
$\eta_0=\eta^{\otimes\nu}\in H^0(M,\nu mK_M)$, we have $\eta\in
V(\nu m,x)$ since $${\rm mult}_x\eta_0=\nu{\rm
mult}_x\eta\geq\nu>\frac{2n\nu m}{\nu m-2}$$ when $\nu >2n+1$. So
$y\notin Bs|V(\nu m,x)|$.
\end{proof}
\begin{lemm} Let $M$ be a nonsingular complex projective variety of general type and of dimension $n$. Let $d\in\mathbf{N}$
be such that $Bs|mdK_M|=\phi$
for $m\geq m_0$. Then there exists
$r_0\in\mathbf{N}$ depending only on $m_0$ and $n$ such that $rd\in\mathcal S_M$ if $r\geq r_0$.
\end{lemm}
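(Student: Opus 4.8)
The plan is to reduce the whole statement to producing, for a generic point $x\in M$ and an arbitrary $y\neq x$, a section of $rdK_M$ that vanishes to order at least $2n+1$ at $x$ but is nonzero at $y$. This is exactly the mechanism already used to show $\nu m\in\mathcal S_M$, now adapted so that it reaches \emph{every} sufficiently large multiple of $d$ rather than only the multiples of a single fixed $m$.

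First I would fix a level $M_\ast$ at which the pluricanonical map is birational, and this is the one genuine external input the argument needs. Since $M$ is of general type of dimension $n$, there is a constant $r_n$, depending only on $n$, such that $\varphi_{|kK_M|}$ maps $M$ birationally onto its image for every $k\ge r_n$ (effective birationality, Hacon--M\textsuperscript{c}Kernan, Takayama and Tsuji). Setting $M_\ast=\max(m_0,r_n)$, the hypothesis gives $Bs|M_\ast dK_M|=\phi$, and since $M_\ast d\ge r_n$ the map $\varphi_{|M_\ast dK_M|}$ is birational onto its image. As in the proof that $\nu m\in\mathcal S_M$, birationality means that for generic $x$ one has $Bs\big|\{\eta\in H^0(M,M_\ast dK_M)\mid\eta(x)=0\}\big|=\{x\}$, so for every $y\neq x$ there is $\eta\in H^0(M,M_\ast dK_M)$ with $\eta(x)=0$ and $\eta(y)\neq0$.

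Next I would run the construction. Given $r$, put $p=2n+1$; provided $r-pM_\ast\ge m_0$, base-point-freeness of $|(r-pM_\ast)dK_M|$ lets me choose $\sigma\in H^0(M,(r-pM_\ast)dK_M)$ with $\sigma(x)\neq0$ and $\sigma(y)\neq0$. Then $\eta_0=\eta^{\otimes p}\otimes\sigma\in H^0(M,rdK_M)$ satisfies ${\rm mult}_x\eta_0\ge p\cdot{\rm mult}_x\eta\ge 2n+1$ and $\eta_0(y)\neq0$. Since $\tfrac{2n\,rd}{rd-2}\le 2n+1$ once $rd\ge 4n+2$, we get $\eta_0\in V(rd,x)$, so $y\notin Bs|V(rd,x)|$; as $y$ was arbitrary this gives $Bs|V(rd,x)|=\{x\}$, which is condition $(ii)$ for $\mathcal S_M$, while condition $(i)$ holds because $r\ge m_0$. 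Hence $rd\in\mathcal S_M$ for all $r\ge r_0:=pM_\ast+m_0=(2n+1)\max(m_0,r_n)+m_0$ (enlarging $r_0$ if needed so that $r_0d\ge 4n+2$), and this $r_0$ depends only on $m_0$ and $n$, as claimed. One could instead package the same input through the semigroup property of $\mathcal S_M$ together with the lemma giving $\nu m\in\mathcal S_M$, but the direct padding argument above yields a cleaner bound.

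The main obstacle is the very first step: one genuinely needs the birationality level $M_\ast$ to be controlled by $m_0$ and $n$ alone. Base-point-freeness of $|mdK_M|$ for all $m\ge m_0$ only bounds when these systems become morphisms, not when they separate generic points, so a soft argument from the hypothesis alone does not suffice; it is precisely the effective birationality theorem for varieties of general type that converts the qualitative ``birational for $m\gg0$'' into the quantitative bound $r_n=r_n(n)$ making $r_0$ depend on nothing but $m_0$ and $n$. Everything after that is the Bertini-plus-tensor-power bookkeeping already appearing in the preceding lemmas.
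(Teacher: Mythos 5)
Your proof is correct, but it reaches the conclusion by a different route than the paper. The paper's proof is very short: it invokes the effective birationality theorem of Hacon--M\textsuperscript{c}Kernan and Takayama (the same external input you identify as the one genuinely needed), picks four distinct primes $m,m'\geq\max\{m_n/d,m_0\}$ and $\nu,\nu'>2n+1$, applies Lemma 4.3 to get $m\nu d,\,m'\nu'd\in\mathcal S_M$, and then lets the semigroup property (Lemma 4.2) together with the coprimality of $m\nu$ and $m'\nu'$ sweep up every sufficiently large multiple of $d$ via the Sylvester--Frobenius phenomenon. You instead bypass Lemmas 4.2 and 4.3 entirely and redo the section construction by hand: take $\eta^{\otimes(2n+1)}$ with $\eta$ separating $x$ from $y$ at the fixed birationality level $M_\ast d$, and pad it to total degree $rd$ with a section $\sigma$ of the base-point-free system $|(r-(2n+1)M_\ast)dK_M|$ not vanishing at $x$ or $y$; the multiplicity bookkeeping ($2n+1\geq\tfrac{2nrd}{rd-2}$ once $rd\geq 4n+2$) then puts $\eta_0=\eta^{\otimes(2n+1)}\otimes\sigma$ in $V(rd,x)$ with $\eta_0(y)\neq 0$. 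Both arguments are sound; yours trades the reuse of the already-proved semigroup and $\nu m$ lemmas for a direct construction, and in exchange produces an explicit and rather clean bound $r_0=(2n+1)\max(m_0,r_n)+m_0$ (adjusted so $r_0d\geq 4n+2$), whereas the paper's bound is implicit in the choice of primes and the Frobenius number. Your closing remark correctly identifies that the same input could be routed through Lemmas 4.2 and 4.3 --- that is precisely what the paper does.
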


\begin{proof}
It is proved in \cite{hm} and \cite{takayama} that for each
$n\in\mathbf{N}$ there exists $m_n\in\mathbf{N}$ such that if $M$
is a smooth projective variety of general type and of dimension
$n$ then the rational map $\varphi_{|mK_M|}$ maps $M$ to its image
birationally for any $m\geq m_n$.

Choose distinct prime numbers $m$, $m'$, $\nu$ and $\nu'$ such that $m, m'\geq\max\{\frac{m_n}{d}, m_0\}$,
$\nu$ and $\nu'>2n+1$. Then Lemma 4.3 implies that
$m\nu d$ and $m'\nu'd\in\mathcal S_M$ and Lemma 4.2 implies the lemma.
\end{proof}
Our main theorem is the following
\begin{theorem}
Let $M$ and $M'$ be smooth complex projective varieties of general
type and of dimension $n$ and $d\in\mathbf{N}$ such that
$Bs|mdK_M|= Bs|mdK_{M'}|=\phi$ for $m\geq m_0$. Let
$r_0\in\mathbf{N}$ as given by Lemma 4.4.

If for some $r\geq r_0$ we have a linear isometry
$$\iota :\big(H^0(M, rdK_M),\langle\langle\ \rangle\rangle\big)\to\big(H^0(M', rdK_{M'}),\langle\langle\ \rangle\rangle\big)$$
then there exists a unique birational map $\psi:M\dashrightarrow M'$ and $c\in\mathbf{C}$ with $|c|=1$ such that
$c\iota=\psi^*$, the isomorphism induced by $\psi$.
\end{theorem}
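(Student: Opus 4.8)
The plan is to chain the inputs of \S 4 with the image-identification Lemma 3.1 into a single projective isomorphism of images, and then to promote that isomorphism to a birational map inducing $\iota$ up to a phase; write $m=rd$ throughout. First I would apply Lemma 4.4 to each of $M$ and $M'$: both are smooth projective of general type of the same dimension $n$ and satisfy $Bs|mdK|=\phi$ for $m\ge m_0$, so the integer $r_0$ it furnishes serves simultaneously for the two varieties. Hence for the given $r\ge r_0$ we have $rd\in\mathcal S_M$ and $rd\in\mathcal S_{M'}$, whence Lemma 4.1 yields property $\mathrm{(CS)}$ for $rdK_M$ and for $rdK_{M'}$. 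Feeding the isometry hypothesis into Lemma 3.1 (applied with this common $m=rd$), the induced isomorphism
$$I:\mathbb{P}H^0(M,rdK_M)^*\to\mathbb{P}H^0(M',rdK_{M'})^*$$
carries the closure of the image of $\varphi_{|rdK_M|}$ isomorphically onto that of $\varphi_{|rdK_{M'}|}$.

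Next I would construct $\psi$. Membership in $\mathcal S$ gives $Bs|rdK_M|=Bs|rdK_{M'}|=\phi$ by condition $(i)$, so $\varphi:=\varphi_{|rdK_M|}$ and $\varphi':=\varphi_{|rdK_{M'}|}$ are genuine morphisms; by the remark following Definition 4.1, condition $(ii)$ makes each of them birational onto its image, which is closed since $M$ and $M'$ are projective. I then set $\psi=(\varphi')^{-1}\circ I\circ\varphi:M\dashrightarrow M'$. As the composite of two maps that are birational onto their images with the isomorphism $I$ between those images, $\psi$ is birational, and by construction $\varphi'\circ\psi=I\circ\varphi$ as rational maps into $\mathbb{P}H^0(M',rdK_{M'})^*$.

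It remains to compare $\iota$ with the isomorphism $\psi^*$ on pluricanonical spaces induced by $\psi$. Under the tautological identification of the hyperplane sections on $\mathbb{P}H^0(M,rdK_M)^*$ with $H^0(M,rdK_M)$ — the one for which $\varphi$ pulls $\mathcal O(1)$-sections back to pluricanonical sections by the identity, and likewise for $\varphi'$ — the relation $\varphi'\circ\psi=I\circ\varphi$ pulls back to the statement that $\psi^*$ and $\iota$ define one and the same projective transformation $I$. Two linear isomorphisms of the same pair of spaces that projectivize to the same map differ by a nonzero scalar, so $c\,\iota=\psi^*$ for some $c\in\mathbf C^\times$. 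Now the isometry recorded at the end of \S 2.1 says $\psi^*$ preserves $\langle\langle\ \rangle\rangle$, while $\iota$ does so by hypothesis; evaluating on a single nonzero $\eta$ and using the homogeneity $\langle\langle c\eta\rangle\rangle=|c|^{\frac{2}{m}}\langle\langle\eta\rangle\rangle$ gives $\langle\langle\eta\rangle\rangle=\langle\langle\psi^*\eta\rangle\rangle=|c|^{\frac{2}{m}}\langle\langle\eta\rangle\rangle$, forcing $|c|=1$. Uniqueness is then automatic: any birational map with $c\iota=\psi^*$ induces the projective map $I$, and since $\varphi'$ is birational onto its image, $\psi=(\varphi')^{-1}\circ I\circ\varphi$ is determined.

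The step I expect to be the main obstacle is the comparison in the third paragraph: one must pin down the identification $\varphi^*\mathcal O(1)\cong rdK_M$ precisely enough that the assertion ``$\iota$ and $\psi$ induce the same $I$'' becomes an equality of genuine linear maps up to scalar, and not merely of projective ones, so that the scalar $c$ is well defined and the homogeneity computation cleanly delivers $|c|=1$. Everything else is a formal assembly of Lemmas 4.4, 4.1 and 3.1, once the base-point-freeness and birationality supplied by $rd\in\mathcal S$ have been recorded.
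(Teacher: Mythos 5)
Your proposal is correct and follows essentially the same route as the paper: Lemma 4.4 and Lemma 4.1 give $\rm (CS)$ for $rdK_M$ and $rdK_{M'}$, Lemma 3.1 identifies the images of the pluricanonical maps via $I$, the birational map $\psi$ is obtained from the resulting commutative diagram, and the scalar $c$ with $|c|=1$ comes from the fact that $\psi^*$ and $\iota$ induce the same projective map and are both isometries. Your explicit homogeneity computation for $|c|=1$ and your care about the identification $\varphi^*\mathcal O(1)\cong rdK_M$ merely spell out steps the paper leaves implicit.
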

\begin{proof}
Lemma 4.1 and Lemma 4.4 together imply the $\rm (CS)$ holds for
$\rho dK_M$ and $\rho dK_{M'}$ if $\rho\geq r_0$. By Lemma 4.4 and
Remark 4.2 $\varphi_{|rdK_M|}$ and $\varphi_{|rdK_{M'}|}$ map $M$
and $M'$ birationally to their images respectively. Denote the
isomorphism induced by $\iota$ as $$I:\mathbb{P}H^0(M,
rdK_M)^*\to\mathbb{P}H^0(M', rdK_{M'})^*\text{.}$$ The assumption
and Lemma 3.1 implies that $I$ identifies the images of
$\varphi_{|rdK_M|}$ and $\varphi_{|rdK_M|}$. Therefore we obtain a
unique birational map $\psi$ making the following diagram of
rational maps commutative:
$$\xymatrix{M\ar@{.>}[r]^{\psi} \ar[d]_{\phi_{|rdK_M|}} & M'\ar[d]^{\phi_{|rdK_{M'}|}}\\
\mathbb{P}H^0(M,rdK_M)^*\ar[r]^I & \mathbb{P}H^0(M',rdK_{M'})^*}$$

Let $\psi^*: H^0(M, rdK_M)\to H^0(M', rdK_{M'})$ be the isomorphism induced by $\psi$. It is an isometry with respect to
$\langle\langle\ \rangle\rangle_{rd}$. Since $\psi^*$ and $\iota$ both induce
$I:\mathbb{P}H^0(M, rdK_M)^*\to\mathbb{P}H^0(M', rdK_{M'})^*$, there is $c\in\mathbf{C}$ such
that
$c\iota=\psi^*$. Both $\iota$ and $\psi^*$ are isometries with respect to those $\langle\langle\ \rangle\rangle$s,
hence $|c|=1$.
\end{proof}

Using this theorem we can obtain several uniform results. For example, in the case $n=2$, we can even have $r_0$
depending only on $n$. The reason is that it is enough to prove the theorem for $M$
and $M'$ both minimal models. By the classical results due to Bombieri and Kodaira $Bs|mK_M|=Bs|mK_{M'}|=\phi$ if
$m\geq 5$. The proof of Lemma 4.4 shows that $\mathcal{S}$, the additive semigroup of $\mathbf{N}$ generated by $\{ab\ |\ a,b\in\mathbf{N}, a\geq 5, b\geq 6\}$, is contained in $\mathcal{S}_M$. It is not hard to see that $m\in\mathcal S$ for any $m\geq 75$, and hence $r_0$ can be chosen to be 75. Then we can take $d=1$ and $m_0=5$ in Theorem 4.1 and get the following

\begin{theorem}
Given a linear isometry $$\iota:\big(H^0(mK_M),\langle\langle\ \rangle\rangle\big)\to \big(H^0(mK_{M'}),\langle\langle\ \rangle\rangle\big)$$ for some $m\geq 75$, there exists a unique pair of a birational map $\psi:M'\dashrightarrow M$ and a complex number $c$ of unit length such that $\psi^*$, the isomorphism induced by $\psi$, is equal to $c\iota$.
\end{theorem}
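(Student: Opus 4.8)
The plan is to obtain Theorem 4.2 as the specialization of Theorem 4.1 to surfaces, the key being that for $n=2$ one may take the explicit uniform threshold $r_0=75$, fed not by the deep birationality bounds of \cite{hm} and \cite{takayama} but by the classical results of Bombieri and Kodaira. The first move is to reduce to the case that $M$ and $M'$ are minimal. Being smooth projective surfaces of general type, each has a minimal model reached by contracting $(-1)$-curves, and the birational contractions $M\dashrightarrow M_{\min}$, $M'\dashrightarrow M'_{\min}$ induce, by the isometry statement closing $\mathbf{2.1}$, isometries $\big(H^0(M,mK_M),\langle\langle\ \rangle\rangle\big)\cong\big(H^0(M_{\min},mK_{M_{\min}}),\langle\langle\ \rangle\rangle\big)$ and likewise for $M'$. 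Conjugating the given $\iota$ by these yields a linear isometry between the pluricanonical spaces of the minimal models, while conversely a birational map between the minimal models composes with the contractions into one between $M$ and $M'$. Hence it suffices to produce $\psi$ and $c$ with $M,M'$ minimal, which I assume henceforth.

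Next I would pin down $r_0=75$. For a minimal surface of general type the classical theorems of Bombieri and Kodaira give $Bs|mK_M|=\emptyset$ and that $\varphi_{|mK_M|}$ maps $M$ birationally onto its image for all $m\geq 5$, so the hypotheses of Theorem 4.1 hold with $d=1$ and $m_0=5$. Specializing the proof of Lemma 4.4 to $n=2$, where $2n+1=5$: Lemma 4.3 shows that every product $ab$ with $a\geq 5$ (making $|aK_M|$ base-point-free and birational onto its image) and $b\geq 6>2n+1$ lies in $\mathcal S_M$, and Lemma 4.2 shows $\mathcal S_M$ is a semigroup; therefore the additive semigroup $\mathcal S$ generated by $\{ab\mid a\geq 5,\ b\geq 6\}$ satisfies $\mathcal S\subseteq\mathcal S_M$, and the same for $M'$. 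An elementary check then shows every integer $m\geq 75$ lies in $\mathcal S$: the generators include $30,35,36,40,42,\dots$, which suffice to represent every integer in the block $75,\dots,104$, and since $30=5\cdot 6\in\mathcal S$ every larger integer reduces into this block by subtracting copies of $30$.

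With $m\geq 75$ fixed we now have $m\in\mathcal S_M\cap\mathcal S_{M'}$, so Lemma 4.1 yields that $\rm (CS)$ holds for both $mK_M$ and $mK_{M'}$, while Remark 4.2 yields that $\varphi_{|mK_M|}$ and $\varphi_{|mK_{M'}|}$ are birational onto their images. From here I would run the final part of the proof of Theorem 4.1 verbatim: $\iota$ induces $I:\mathbb P H^0(M,mK_M)^*\to\mathbb P H^0(M',mK_{M'})^*$, Lemma 3.1 identifies the closures of the two images, and the resulting commutative square of rational maps produces a unique birational $\psi:M'\dashrightarrow M$. Since $\psi^*$, the isomorphism induced by $\psi$, and $\iota$ both induce $I$, they agree up to a scalar $c$, i.e.\ $\psi^*=c\iota$; and because both are isometries while $\langle\langle c\eta\rangle\rangle=|c|^{2/m}\langle\langle\eta\rangle\rangle$, necessarily $|c|=1$. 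Uniqueness of $\psi$ follows from the birationality of $\varphi_{|mK_M|}$, which lets one recover $\psi$ from $I$ on the open locus where the canonical maps are isomorphisms onto their images.

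The hard part is not the appeal to Theorem 4.1, which is mechanical once the threshold is known, but making the two dimension-two inputs genuinely uniform: checking that the classical $m\geq 5$ statements are exactly strong enough to supply Lemma 4.3 with a factor $a\geq 5$ alongside $b\geq 6$, and carrying out the numerical bookkeeping that certifies the clean bound $r_0=75$. Some care is also needed in the reduction to minimal models, since the base-point-freeness and birationality hypotheses of Theorem 4.1 are available only for the minimal representatives, so the minimal model must be installed before those classical theorems are invoked.
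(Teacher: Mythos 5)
Your proposal follows essentially the same route as the paper: reduce to minimal models, invoke Bombieri--Kodaira for $m\geq 5$, use Lemmas 4.2 and 4.3 to show the additive semigroup generated by $\{ab\mid a\geq 5,\ b\geq 6\}$ lies in $\mathcal S_M$, verify that every integer $\geq 75$ belongs to it, and then apply the machinery of Theorem 4.1 with $d=1$ and $m_0=5$. Your write-up merely makes explicit some steps the paper leaves implicit (the isometry conjugation in the minimal-model reduction and the numerical check of the block $75,\dots,104$), so it is correct and matches the paper's argument.
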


For higher dimensions, in the same spirit we obtain the following
\begin{theorem}
There exists $r_0\in\mathbf{N}$ which depends on $n$, such that for any two smooth
complex projective varieties $M$ and $M'$ of general type and of
dimension $n$ which both admit smooth minimal models, if for some $r\geq
r_0$ we have a linear isometry
$$\iota :\big(H^0(M, 2r(n+2)!K_M),\langle\langle\ \rangle\rangle\big)\to\big(H^0(M', 2r(n+2)!K_{M'}),\langle\langle\ \rangle\rangle\big)$$
then there exists a unique birational map $\psi:M\dashrightarrow
M'$ and a unique complex number $c$ of unit length such that $\psi^*$, the
isomorphism induced by $\psi$, is equal to $c\iota$ .
\end{theorem}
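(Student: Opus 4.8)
The plan is to derive this from Theorem 4.1 by first passing to minimal models, where base-point-freeness of pluricanonical systems becomes effective with a threshold depending only on $n$. Since $M$ and $M'$ are assumed to possess smooth minimal models $M_0$ and $M_0'$, and since every birational map induces an isometry of pseudonormed pluricanonical spaces (as recorded in $\mathbf{2.1}$), I would first transport the hypothesis across these birational identifications. Concretely, composing the given $\iota$ with the canonical isometries $H^0(M_0, 2r(n+2)!K_{M_0}) \cong H^0(M, 2r(n+2)!K_M)$ and $H^0(M', 2r(n+2)!K_{M'}) \cong H^0(M_0', 2r(n+2)!K_{M_0'})$ yields a linear isometry $\iota_0$ between the corresponding spaces attached to $M_0$ and $M_0'$. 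Any birational $\psi_0: M_0 \dashrightarrow M_0'$ with $c\iota_0 = \psi_0^*$ then composes with the birational maps $M \dashrightarrow M_0$ and $M_0' \dashrightarrow M'$ to produce the desired $\psi: M \dashrightarrow M'$ satisfying $c\iota = \psi^*$, and its uniqueness follows from that of $\psi_0$ via the bijective correspondence between birational maps $M \dashrightarrow M'$ and $M_0 \dashrightarrow M_0'$. Thus it suffices to prove the theorem when $M$ and $M'$ are themselves smooth minimal models.

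The key step is to produce, for smooth minimal models of general type and dimension $n$, a $d$ and an $m_0$ depending only on $n$ with $Bs|mdK_M| = \phi$ for all $m \geq m_0$. Here I would invoke effective base-point-freeness. For a smooth minimal model $M_0$ the canonical class $K_{M_0}$ is nef and big, so Koll\'ar's effective base-point-freeness theorem guarantees that $|cK_{M_0}|$ is base-point-free once $c$ is at least a threshold governed by $n$ alone, which for the present purpose is $2(n+2)!$. Taking $d = 2(n+2)!$, every $m \geq 1$ gives $md \geq 2(n+2)!$, so $Bs|mdK_{M_0}| = \phi$ for all $m \geq 1$; thus the hypotheses of Theorem 4.1 hold with this $d$ and with $m_0 = 1$, both depending only on $n$. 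The identical reasoning applies to $M_0'$.

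Finally I would feed these into Theorem 4.1. With $d = 2(n+2)!$ and $m_0 = 1$, Lemma 4.4 produces $r_0 \in \mathbf{N}$ depending only on $m_0$ and $n$, hence only on $n$; this is the $r_0$ of the statement. For any $r \geq r_0$, the linear isometry hypothesized on $H^0(M, 2r(n+2)!K_M) = H^0(M, rdK_M)$, pushed to the minimal models as $\iota_0$, triggers Theorem 4.1 and yields the unique birational $\psi_0$ and unit-modulus $c$ with $c\iota_0 = \psi_0^*$. Transporting back along the minimal-model identifications of the first paragraph gives the unique $\psi$ and $c$ claimed.

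The main obstacle is the second step. The uniformity of $r_0$ in $n$ rests entirely on having base-point-freeness of $|mdK|$ with a bound governed by dimension alone, and this is exactly what fails for a general model: when $M$ is not minimal the divisor $K_M$ need not be nef, and no dimension-only threshold is available by these methods. It is the nef-and-big hypothesis on the canonical class of a minimal model that unlocks the effective result, so the reduction to minimal models—legitimate precisely because smooth minimal models are assumed to exist and because the pseudonorm is a birational invariant—is what carries the real weight. The remaining points, namely the correct composition of the two layers of isometries and the verification that $r_0$ and $d = 2(n+2)!$ depend only on $n$, are routine bookkeeping.
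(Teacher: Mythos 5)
Your overall route is exactly the paper's: reduce to smooth minimal models using the birational invariance of the pseudonorm, use Koll\'ar's effective base point freeness to get the hypotheses of Theorem 4.1 with $d=2(n+2)!$, and conclude via Lemma 4.4 and Theorem 4.1. The reduction step and the final bookkeeping are fine (and you spell out the transport of $\iota$ across the minimal-model identifications more explicitly than the paper does).

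There is, however, a genuine error in your middle step. Koll\'ar's theorem, in the form used here, says: if $L$ is nef Cartier and $aL-(K_X+\Delta)$ is nef and big, then $|2(n+2)!(a+n)L|$ is base point free. Applying this with $L=K_{M_0}$, $\Delta=0$ and $a\geq 2$ (so that $(a-1)K_{M_0}$ is nef and big) yields base point freeness of $|2(n+2)!\,m\,K_{M_0}|$ only for $m=a+n\geq n+2$. It does \emph{not} assert that $|cK_{M_0}|$ is free for every $c\geq 2(n+2)!$, so your claims that $|2(n+2)!K_{M_0}|$ itself is free and that one may take $m_0=1$ are unsupported. The correct threshold is $m_0=n+2$, which is what the paper uses. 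Fortunately this does not damage your conclusion: $m_0=n+2$ still depends only on $n$, so the $r_0$ produced by Lemma 4.4 still depends only on $n$, and the rest of your argument goes through verbatim with this correction.
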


\begin{proof}
As remarked in the paragraph before Theorem 4.2 we may assume that
$M$ and $M'$ are both minimal models, i.e. $K_M$ and $K_{M'}$ are
both nef.

Koll\'ar's effective base freeness theorem (\cite{kollar1},
$\mathbf{1.1\ Theorem}$) says that if a log pair $(X,\Delta)$ is
proper and klt of dimension $n$, $L$ a nef Cartier divisor on $X$,
and $a\in\mathbf{N}$ such that $aL-(K_X+\Delta)$ is nef and big,
then $|2(n+2)!(a+n)L|$ is base point free. Applying this to the
case $X=M$ (resp.~$M'$), $\Delta=0$, $L=K_M$ (resp.~$K_{M'}$) and
$a\geq 2$ we have that $Bs|2m(n+2)!K_M|=Bs|2m(n+2)!K_{M'}|=\phi$
if $m\geq n+2$.

Therefore we may take $d=2(n+2)!$ and $m_0=n+2$ in Lemma 4.4 and Theorem 4.1, and then the theorem follows.
\end{proof}

\begin{rema}
It is shown in \cite{bchm} that every variety of general type
admits a minimal model. However in the proof above the smoothness
of the minimal models are required.
\end{rema}

Here, in order to illustrate how the main idea goes, we only deal with the case when suitable base
point free conditions hold. The presence of base loci is another technical issue. By a careful analysis and modification of the results in $\mathbf{2}$, a suitable use of the effective base point
freeness, and the existence of minimal models for varieties of general type, we are still able to say something for the general case. The following theorems 4.4 and 4.5 are the precise results whose proofs can be found in \cite{chi} and \cite{cy}.

We first recall some facts about the minimal models. It is known that
every projective manifold $X$ of general type admits a minimal model $Y$ with $K_Y$ $\mathbf Q$-Cartier\cite{bchm}. The index of
$Y$ is defined as $j_Y=\min\{j\ |\ jK_Y\text{ is Cartier}\}$. It is also known that any two birational minimal models have the same index. Hence we can define the index of a projective manifold to be that of any of its minimal models. We have the following
\begin{theorem}{\rm (}\cite{chi} and \cite{cy}{\rm )}
For every natural number $j$ there exists $r_{n,j}$ which depends only on $n$ and $j$ such that
given any two $n$-dimensional projective manifolds $M$ and $M'$ with indices $j$, and a linear isometry $$\iota:\big(H^0(M,2r(n+2)!K_M),\langle\langle\ \rangle\rangle\big)\to \big(H^0(M',2r(n+2)!K_{M'}),\langle\langle\ \rangle\rangle\big)$$ for some $r\geq r_{n,j}$, there
exists a unique birational map $\psi:M'\dashrightarrow M$ and a unique complex number $c$ of unit length such that the induced map $\psi^*$ is equal to $c\iota$.
\end{theorem}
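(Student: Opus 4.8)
The plan is to run the proof of Theorem 4.3 on the minimal models, the one new feature being that these are now klt varieties of index $j$ rather than smooth, so that the machinery of Sections 2--4 must be shown to survive the passage to singular varieties while all effective bounds are kept depending only on $n$ and $j$. By \cite{bchm} each of $M$, $M'$ admits a minimal model $Y$, $Y'$ with $K_Y$, $K_{Y'}$ nef and $\mathbf Q$-Cartier of index $j$. Since plurigenera and, by the isometry statement in $\mathbf{2.1}$, the pseudonorm are birational invariants, the given $\iota$ is the same datum as a linear isometry between $\big(H^0(Y,2r(n+2)!K_Y),\langle\langle\ \rangle\rangle\big)$ and its counterpart on $Y'$, where $H^0(Y,mK_Y)$ denotes sections of the reflexive sheaf $\mathcal O_Y(mK_Y)$ (these coincide with $H^0(M,mK_M)$ for every $m$, and $mK_Y$ is Cartier once $j\mid m$). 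It therefore suffices to prove the statement with $M$, $M'$ replaced by $Y$, $Y'$, after which the unique $\psi:M'\dashrightarrow M$ and unit $c$ with $\psi^*=c\iota$ are produced exactly as in Theorem 4.3.

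First I would extend the asymptotic theory of Section 2 to a klt $Y$. The form $\langle\eta\rangle_m$ is defined on the smooth locus $Y_{\mathrm{reg}}$ as in $\mathbf{2.1}$, and $\langle\langle\eta\rangle\rangle=\int_Y\langle\eta\rangle_m$ is finite: pulling back by a log resolution $\pi:\widetilde Y\to Y$, the discrepancies of $(Y,0)$ exceed $-1$, which is exactly the integrability threshold for the local model $|Z^A+t\phi|^{2/m}|Z|^{2B}$ of Theorem 2.2.1. Because $\mathrm{lct}$, $\mu(D,x)$ and $C(D)$ in $\mathbf{2.3}$ are read off from a log resolution of $(Y,D_{\eta_0})$, and Theorem 2.4.2 is obtained by summing the purely local expansions of Theorem 2.2.1 over such a resolution, the whole of Theorem 2.4.2 holds verbatim on $Y$; consequently the image-identifying Lemma 3.1 holds for $Y$, $Y'$.

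Next I would supply the effective base point freeness driving Section 4. Applying Koll\'ar's theorem (\cite{kollar1}) to the proper klt pair $(Y,0)$ with the nef Cartier divisor $L=jK_Y$ and any $a\geq 2$, so that $aL-K_Y=(aj-1)K_Y$ is nef and big, gives that $\big|2(n+2)!(a+n)\,jK_Y\big|$ is base point free. This furnishes an integer $d$, a multiple of $j$ bounded in terms of $n$ and $j$, with $Bs|mdK_Y|=\phi$ for all large $m$; the divisibility needed to make the pluricanonical level Cartier on $Y$ is folded into $d$ and into the residue condition on $r$ absorbed by $r_{n,j}$. With this $d$ the proofs of Lemmas 4.1--4.4 go through on $Y$ \emph{provided} every construction is carried out at a generic point $x$, which may be taken in $Y_{\mathrm{reg}}$: Bertini still yields sections of prescribed multiplicity at $x$, the inequality (2.3) still applies at the smooth point $x$, and $\mathcal S_Y$ is again a semigroup. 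One then concludes, as in Theorem 4.3, that $\rm (CS)$ holds for $rdK_Y$ once $r$ exceeds a bound depending only on $n$ and $j$.

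The hard part will be isolating the characteristic indicatrix at the chosen smooth $x$ against the competition from the singular locus of $Y$. In the smooth case $\mathrm{lct}(D_{\eta_0},y)\in\{1,\infty\}$ for $y\neq x$ was immediate from $\mathrm{mult}_y\eta_0\leq 1$; on $Y$ a general $D_{\eta_0}$ meets the singular locus, where the negative discrepancies of the ambient klt singularity can depress $\mathrm{lct}(D_{\eta_0},y)$ and the smooth-point bound (2.3) is unavailable. The resolution I expect is a uniform lower bound $\mathrm{lct}(D_{\eta_0},y)\geq c_{n,j}>0$ at every singular $y$, valid for all admissible $D_{\eta_0}$, coming from the boundedness of klt singularities of fixed index $j$ (or from an explicit discrepancy computation on a fixed resolution). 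Granting it, one selects $\eta_0\in V(rd,x)$ with $\mathrm{mult}_x\eta_0$ so large that $\mathrm{lct}(D_{\eta_0},x)\leq n/\mathrm{mult}_x\eta_0<\min\{\,1/2-1/(rd),\,c_{n,j}\,\}$; this both secures $2\,\mathrm{lct}(D_{\eta_0},x)+2/(rd)<1$ and makes $x$ the unique point of least $\mathrm{lct}$, hence the unique point of $C(D_{\eta_0})$. It is precisely the multiplicity forced by $c_{n,j}$ that makes the threshold $r_{n,j}$ depend on $j$; establishing the bound $c_{n,j}$ uniformly is the main obstacle, and is where the detailed analysis of \cite{chi} enters.
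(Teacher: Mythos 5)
The paper does not actually prove this statement: Theorem 4.4 is stated with its proof explicitly deferred to \cite{chi} and \cite{cy}, and the only in-paper indication of the argument is the sentence listing its ingredients (``a careful analysis and modification of the results in $\mathbf{2}$, a suitable use of the effective base point freeness, and the existence of minimal models''), together with the warning that ``the presence of base loci is another technical issue.'' So there is no in-paper proof to compare yours against line by line; I can only measure your sketch against that indication and against internal consistency. Your skeleton uses the same three ingredients, but you deploy them differently: the paper's phrasing suggests the analysis stays on a smooth model where $2r(n+2)!K_M$ is an honest line bundle and the new difficulty is the nonempty base locus $B=Bs|2r(n+2)!K_M|$ (which Lemma 3.1 and the definition of $\rm (CS)$ are already set up to accommodate via $Bs|H|-B$), whereas you move the entire analysis onto the singular minimal model $Y$, where base loci disappear but the singularities of the ambient space become the problem.

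That choice creates two concrete gaps. First, a divisibility mismatch: your Koll\'ar step produces base point freeness only for the systems $|2(n+2)!(a+n)jK_Y|$, so the semigroup argument of Lemmas 4.2--4.4 run on $Y$ yields $\rm (CS)$ only for levels $2\rho(n+2)!jK_Y$, i.e.\ for $r$ divisible by (or in special residue classes modulo) $j$. The theorem asserts the conclusion for \emph{every} $r\geq r_{n,j}$, and a residue condition cannot be ``absorbed'' into a lower bound $r_{n,j}$; for $r$ prime to $j$ the sheaf $\mathcal O_Y(2r(n+2)!K_Y)$ is not even a line bundle on $Y$, which is presumably why the statement is formulated on the smooth $M$ in the first place. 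Second, your claim that Theorem 2.4.2 ``holds verbatim'' on a klt $Y$ overreaches: the local model of Theorem 2.2.1 is proved for $|Z^A+t\phi|^{2/m}|Z|^{2B}$ with $B\in(\mathbf N\cup\{0\})^n$, while pulling the pseudonorm density back through a resolution of a klt, index-$j$ variety produces exponents $b_E\in\frac{1}{j}\mathbf Z$ with $b_E>-1$, possibly negative and fractional. Extending the local expansion (and the identification of $c(\eta_0,\eta)=0$ with vanishing on $C(D_{\eta_0})$) to that range is exactly the ``careful analysis and modification of the results in $\mathbf{2}$'' the paper alludes to, and it cannot be waved through. Your final paragraph correctly isolates the remaining issue --- keeping the characteristic indicatrix at the chosen smooth point $x$ against competition from $\operatorname{Sing}(Y)$ --- but the uniform bound $c_{n,j}$ is asserted rather than established (for a \emph{general} member of a system that is free near a singular point $y$ one expects $\operatorname{lct}(D_{\eta_0},y)=1$ by Bertini, and in any case $b_E+1\geq 1/j$ gives the $j$-dependence, but this needs to be written out). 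In short: a sensible reconstruction that names the right difficulties, but not a proof, and at least the divisibility point would need to be repaired before the argument could match the stated theorem.
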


The number $r_{n,j}$ in this theorem depends not only on the dimension $n$ but also on the index of minimal models.
To get a uniform result in higher dimensional cases, we need to introduce some objects here. Let
$$V(M,m,r)={\rm image}\left({\rm Sym}^r H^0(M, mK_M)\to H^0(M,rmK_M)\right)$$ for any $m,r\in\mathbf N$, where the map
is the canonical one. $V(M,m,r)$ inherits from $\left(H^0(M,rmK_M),\langle\langle\ \rangle\rangle_{rm}\right)$ a pseudonorm, still denoted as $\langle\langle\ \rangle\rangle_{rm}$. It is clear that $\big(V(M,m,r),\langle\langle\ \rangle\rangle_{rm}\big)$ is a birational invariant.\\

Recall also the definition of $m_n$ in the proof of Lemma 4.4, which is a number such that $\phi_{|mK_M|}$ maps
$M$ birationally to its image for every $m\geq m_n$. With these notions, we can also prove the following result :

\begin{theorem}{\rm (}\cite{chi} and \cite{cy}{\rm )}
Given a linear isometry $$\iota :\big(V(M,m,r),\langle\langle\ \rangle\rangle\big)\to\big(V(M',m,r),\langle\langle\ \rangle\rangle\big)$$ for some $r\geq 2n+1$ and $m\geq m_n$, there
exists a unique birational map $\psi:M'\dashrightarrow M$ and a unique complex number $c$ of unit length such that the induced map $\psi^*$ is equal to $c\iota$.
\end{theorem}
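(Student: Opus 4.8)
The plan is to transplant the scheme of the proof of Theorem 4.1 onto the sub-linear system $V(M,m,r)\subseteq H^0(M,rmK_M)$ rather than a complete pluricanonical system. Two preliminary facts let the machinery of Sections 2 and 3 apply to $V(M,m,r)$. First, since $m\geq m_n$ the map $\varphi_{|mK_M|}$ is birational onto its image, and $\varphi_{V(M,m,r)}$ is the composite of $\varphi_{|mK_M|}$ with the $r$-th Veronese embedding, so it too is birational onto its image; in particular $Bs|V(M,m,r)|=Bs|mK_M|=:B$, and for a generic $x\in M$ one has $\varphi_{V}^{-1}\varphi_{V}(x)-B=\{x\}$ as in Remark 4.2. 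Second, the pseudonorm on $V(M,m,r)$ is the restriction of $\langle\langle\ \rangle\rangle_{rm}$, so for $\eta_0,\eta\in V(M,m,r)$ the coefficient $c(\eta_0,\eta)$ furnished by Theorem 2.4.2 is still computed purely from the pseudonorm; hence the isometry $\iota$ preserves it, $c(\iota\eta_0,\iota\eta)=c(\eta_0,\eta)$.

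The crucial new point, compared with Theorem 4.1 where highly singular sections were produced from strong base-point-freeness and the semigroup $\mathcal S_M$, is that inside $V(M,m,r)$ singular sections are manufactured by taking products, exactly in the spirit of the proof of Lemma 4.3. For a generic $x\in M$ choose distinct generic sections $\eta_1,\dots,\eta_r$ in $\{\eta\in H^0(M,mK_M)\,|\,\eta(x)=0\}$ and set $\eta_0=\eta_1\otimes\cdots\otimes\eta_r\in V(M,m,r)$. Then ${\rm mult}_x\eta_0=\sum_i{\rm mult}_x\eta_i\geq r$, so by $(2.3)$
$${\rm lct}(D_{\eta_0},x)\leq\frac{n}{{\rm mult}_x\eta_0}\leq\frac{n}{r}\leq\frac{n}{2n+1}<\frac12,$$
whereas Bertini's theorem makes each $D_{\eta_i}$ smooth and the collection transversal away from the base locus $\{x\}\cup B$, so $D_{\eta_0}$ is reduced simple normal crossing there and ${\rm lct}(D_{\eta_0},y)=1$ for $y\notin\{x\}\cup B$. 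Thus, off $B$, the point $x$ is the unique most singular point, $C(D_{\eta_0})-B=\{x\}$, and since $m\geq m_n$ forces $m>2$,
$$2\,{\rm lct}(D_{\eta_0})+\frac{2}{rm}\leq\frac{2n}{r}+\frac{2}{rm}=\frac{2nm+2}{rm}\leq\frac{2nm+2}{(2n+1)m}<1.$$
This is precisely property $\rm(CS)$ for $V(M,m,r)$, with the distinguished section lying in $V(M,m,r)$, and the numerology $r\geq 2n+1$ is exactly what is needed to force the displayed strict inequality.

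With $\rm(CS)$ established, the conclusion follows the proof of Theorem 4.1, except that Lemma 3.1 is replaced by its extension to sub-linear systems (the more general image-identification result referred to just after Lemma 3.1). Running the argument of the $\mathbf{Idea}$ preceding Definition 3.1 inside $V(M,m,r)$, a generic hyperplane $H$ of $V(M,m,r)$ in the image of $\varphi_{V}$ is recovered as $\{\eta\in V(M,m,r)\,|\,c(\eta_0,\eta)=0\}$ (here one uses that every section vanishes on $B$, so restricting the vanishing condition to $C(D_{\eta_0})-B$ is the same as restricting to $C(D_{\eta_0})$). Because $\iota$ preserves $c(\cdot,\cdot)$, this description transports to exhibit $I(H)$ in the image of $\varphi_{V'}$, so the induced isomorphism $I:\mathbb{P}V(M,m,r)^*\to\mathbb{P}V(M',m,r)^*$ carries the closure of the image of $\varphi_{V}$ onto that of $\varphi_{V'}$. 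Composing with the birational maps $\varphi_{V}$ and $\varphi_{V'}$ produces a birational map $\psi:M'\dashrightarrow M$; since $\psi^*$ and $\iota$ induce the same $I$ they differ by a scalar, $c\iota=\psi^*$, and both being isometries for $\langle\langle\ \rangle\rangle$ gives $|c|=1$, with uniqueness of $\psi$ and $c$ routine.

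The step I expect to be the main obstacle is the control of the base locus in verifying $\rm(CS)$, not the formal transport of structure under $\iota$. Because every section of $mK_M$ vanishes along $B$, the product $\eta_0$ is forced to be very singular along $B$, and a priori the global characteristic index could be achieved on $B$ rather than at $x$, in which case $C(D_{\eta_0})-B$ would fail to equal $\{x\}$; ruling this out (or circumventing it through the $-B$ formalism of Section 3 together with the refined asymptotic expansions of Remark 2.2.2, which also allow one to drop the hypothesis $2\,{\rm lct}+\tfrac{2}{rm}<1$ as in Remark 3.2) is exactly the delicate analysis carried out in \cite{chi} and \cite{cy}. Once the base locus is handled and the sub-linear-system version of the image-identification lemma is in place, the remaining steps are faithful repetitions of the arguments already given for Lemma 3.1 and Theorem 4.1.
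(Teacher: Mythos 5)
The paper itself does not prove this statement: Theorem 4.5 is announced together with the remark that ``the presence of base loci is another technical issue'' and that the proofs of Theorems 4.4 and 4.5 are to be found in \cite{chi} and \cite{cy}. So there is no in-paper argument to compare yours with; the question is whether your outline closes the gap the authors chose to defer, and it does not. Your transport-of-structure steps are sound and faithful to the paper's scheme: the pseudonorm on $V(M,m,r)$ is the restriction of $\langle\langle\ \rangle\rangle_{rm}$, so $\iota$ preserves the coefficients $c(\cdot,\cdot)$ of Theorem 2.4.2; $Bs|V(M,m,r)|=Bs|mK_M|=B$; and the Idea of Section 3 together with the contradiction argument of Lemma 3.1 goes through for the sub-linear system once $\rm(CS)$ is known. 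The verification of $\rm(CS)$, however, is exactly where your argument has a hole, and you say so yourself. For $\eta_0=\eta_1\otimes\cdots\otimes\eta_r$ with all $\eta_i$ vanishing at $x$, the section $\eta_0$ also vanishes to order at least $r\cdot\mathrm{ord}_E$ along every component $E$ of the fixed part of $|mK_M|$, and nothing in the construction prevents the characteristic index $(\mathrm{lct}(D_{\eta_0},y),\mu(D_{\eta_0},y))$ at a point $y\in B$ from strictly dominating the index at $x$ in the total order of Definition 2.3.1(3). If that happens then $C(D_{\eta_0})\subseteq B$ and $C(D_{\eta_0})-B=\emptyset$, so $\rm(CS)$ fails for this $\eta_0$; worse, Theorem 2.4.2 then reports asymptotics governed entirely by points of $B$, where \emph{every} element of $V(M,m,r)$ vanishes, so the leading coefficient $c(\eta_0,\eta)$ vanishes identically on $V(M,m,r)$ and detects nothing about $H$.

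Closing this requires more than the $-B$ bookkeeping of Section 3: one must redo the local expansion of Theorem 2.2.1 for the ``moving part,'' i.e.\ for integrands of the form $|Z^{A}+tZ^{A'}\phi|^{2/m}|Z|^{2B}$ in which the perturbing section is itself constrained to vanish along the fixed components (equivalently, modify conditions (ii)--(iii) of Section 2.4 so that the common vanishing of all sections of $V(M,m,r)$ along $B$ is absorbed into the weight $|Z|^{2B}$, raising the relevant $l_j$ over $B$), and then show that the resulting leading coefficient is supported on the part of the indicatrix lying off $B$. That is the ``careful analysis and modification of the results in Section 2'' that the authors allude to, and it is the real mathematical content of Theorem 4.5; deferring it to \cite{chi} and \cite{cy}, as you do, leaves the proof incomplete at its crux. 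The remainder of your write-up --- the numerology $r\geq 2n+1$ giving $\mathrm{lct}(D_{\eta_0},x)\leq n/r<1/2$ and $2\,\mathrm{lct}(D_{\eta_0})+2/(rm)<1$ for $m\geq 3$, Bertini off $\{x\}\cup B$, and the descent from the identification of the images to $\psi$ and then to $c\iota=\psi^*$ with $|c|=1$ --- is correct and is the routine part.
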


\begin{rema}
Many of the results in this paper have a more general version for
$L+mK_M$ where $L$ is a hermitian line bundle (see \cite{chi}).
\end{rema}


\vspace{1cm}
\begin{tabular}{ccc}
\begin{minipage}[t]{\tw}
\textbf{C.-Y. Chi}\\
Department of Mathematics\\
Harvard University\\
1 Oxford street\\
Cambridge, MA 02138\\
email: cychi@math.harvard.edu
\end{minipage}
&
\begin{minipage}[t]{\tw}
\textbf{S.-T. Yau}\\
Department of Mathematics\\
Harvard University\\
1 Oxford street\\
Cambridge, MA 02138\\
email: yau@math.harvard.edu
\end{minipage}
\end{tabular}\end{document}